\def\Underline{\setbox0\hbox\bgroup\let\\\endUnderline}
\def\endUnderline{\vphantom{y}\egroup\smash{\underline{\box0}}\\}
\def\|{\verb|}
\newcommand{\m}{\boldsymbol m}
\newcommand{\x}{\boldsymbol x}
\newcommand{\y}{\boldsymbol y}
\newcommand\Z{{\mathbb Z}}
\newcommand\N{{\mathbb N}}
\newcommand\Zz{{\mathbb Z_{\geq 0}}}
\newcommand\PP{{\mathcal{P}}}
\newcommand\NN{{\mathcal{N}}}
\newcommand\Pp{${\mathcal{P}}$-position}
\newcommand\Pps{${\mathcal{P}}$-positions}
\newcommand\Np{${\mathcal{N}}$-position}
\newcommand\RMi{\,{\rm i}\,}
\newcommand\RMii{\,{\rm ii}\,}
\newcommand{\+}{\oplus}
\theoremstyle{definition}
\newtheorem{theorem}{Theorem}
\newtheorem{claim}{Claim}
\newtheorem{proposition}{Proposition}
\newtheorem{corollary}{Corollary}
\newtheorem{definition}{Definition}
\newtheorem{remark}{Remark}
\newtheorem{example}{Example}
\newcommand\dyn{{\textsc{Digraph Yama Nim}\xspace}}
\newcommand\gpos{$G_{\mathrm{pos}}$(\text{POS CNF})\xspace}
\begin{document}

\title[Article Title]{Digraph Yama Nim}


\author[1]{\fnm{Hiyu} \sur{Inoue}}\email{hiyuuinoue@gmail.com}
\equalcont{These authors contributed equally to this work.}

\author[2]{\fnm{Shun-ichi} \sur{Kimura}}\email{skimura@hiroshima-u.ac.jp}
\equalcont{These authors contributed equally to this work.}

\author[3]{\fnm{Hikaru} \sur{Manabe} }\email{urakihebanam@gmail.com}
\equalcont{These authors contributed equally to this work.}

\author[4]{\fnm{Koki} \sur{Suetsugu}}\email{suetsugu.koki@gmail.com}
\equalcont{These authors contributed equally to this work.}

\author*[5]{\fnm{Takahiro} \sur{Yamashita}}\email{d236676@hiroshima-u.ac.jp}

\author[6]{\fnm{Kanae} \sur{Yoshiwatari}}\email{yoshiwatari.kanae.7p@kyoto-u.ac.jp}
\equalcont{These authors contributed equally to this work.}




\affil[1,2,5]{
\orgdiv{Department of Mathematics},
\orgname{Hiroshima University}, 
\orgaddress{\street{Kagamiyama}, \city{Higashi-Hiroshima City}, \postcode{739-8526}, \state{Hiroshima}, \country{Japan}}}

\affil[3]{
\orgdiv{College of Information Science},
\orgname{University of Tsukuba}, 
\orgaddress{\street{Tennoudai}, \city{Tsukuba City}, \postcode{305-8577}, \state{Ibaraki}, \country{Japan}}}

\affil[4]{
\orgname{Gifu University}, 
\orgaddress{\street{1-1 Yanagido}, \city{Gifu City}, \postcode{501-1193}, \state{Gifu}, \country{Japan}}}

\affil*[4]{
\orgname{Waseda University}, \orgaddress{\street{513 Waseda-Tsurumaki-Cho}, \city{Sinjuku-ku}, \postcode{162-0041}, \state{Tokyo}, \country{Japan}}}

\affil*[4]{
\orgname{Osaka Metropolitan University}, 
\orgaddress{\street{3-3-138 Sugimoto}, \city{Sumiyoshi-ku}, \postcode{558-8585}, \state{Osaka}, \country{Japan}}}

\affil[6]{
\orgdiv{Graduate School of Informatics},
\orgname{Kyoto University}, 
\orgaddress{\street{Yoshida-honmachi}, \city{Sakyo-ku}, \postcode{606-8501}, \state{Kyoto}, \country{Japan}}}




\abstract{
{\sc Yama Nim} is a variant of two piles {\sc Nim}. In this ruleset, the player chosses one of the piles and removes at least two tokens from the pile. In the same move, the player adds one token to the other pile. We show the winning strategies and SG-values of this ruleset.

In addition, we introduce a generalization of {\sc Yama Nim},
named \dyn. In this ruleset, a digraph is given and there are some tokens on each vertex of the digraph. Each player, in their turn, chooses one vertex and removes at least its out-degree plus one tokens from the vertex. Furthermore, one token is added to each vertex to which a directed edge from the chosen vertex is connected. We show that the winner determination problem of \dyn~is PSPACE-complete even when the input graph
is bipartite and directed acyclic.
Despite this, there are some cases that can be solved easily and we show them.
}



\keywords{Combinatorial game theory, Nim, Sprague-Grundy value, Digraph}



\maketitle

\section{Introduction}
\label{sec:Intro}
{\sc Nim}, studied by Bouton in \cite{B02}, is one of the most popular games studied in combinatorial game theory.
In {\sc Nim}, some piles of tokens are given as a position of the game. Each player, in their turn, chooses one pile and removes an arbitrary positive number of tokens from the pile. The player who moves last is the winner.

As a generalization of {\sc Nim}, {\sc Hypergraph Nim} is studied in \cite{BGHMM24}.
In {\sc Hypergraph Nim},
a hypergraph is given and there are some tokens on each vertex of the hypergraph. Each player, in their turn, chooses one hyperedge and removes an arbitrary positive number of tokens from every vertex included in the hyperedge.
Not only the original {\sc Nim}, but also {\sc Moore's Nim} \cite{M10}, {\sc Circular Nim} \cite{DH13}, {\sc Exact-}$k$ {\sc  Nim} \cite{BGHMM18}, and {\sc Simplicial Nim} \cite{ES96} can be considered as special cases of {\sc Hypergraph Nim}.
\begin{figure}
    \centering    \includegraphics[width=1\linewidth]{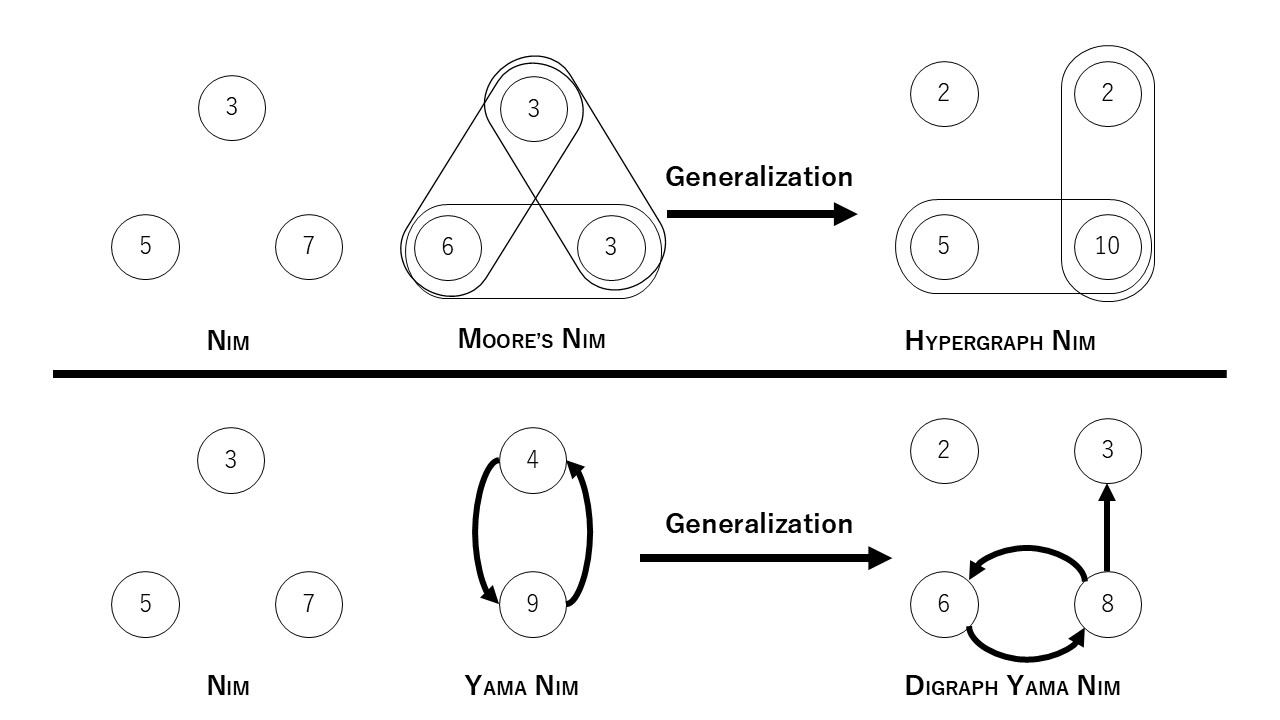}
    \caption{
    Two directions of generalizations of {\sc Nim}
    }
    \label{fig:generalization}
\end{figure}
However, in {\sc Hypergraph Nim} and its special cases, rulesets in which some tokens are added to some piles are not included. In order to consider such rulesets, we generalize {\sc Nim} in another direction by using a digraph. 

One simple ruleset in which a token is added to a pile in a move is
{\sc Yama Nim},
a ruleset introduced in the fifth author's master thesis \cite{Y23}.
In this ruleset, two piles of tokens are given as a position of the game. Each player, in their turn, chooses one of the piles and removes at least two tokens from the pile. In addition, a token is added to the other pile. The winning strategies and SG-values 
are studied in the thesis.


We consider a generalization of {\sc Nim} and {\sc Yama Nim}, called {\sc Digraph Yama Nim}. In this ruleset, a digraph is given and there are some tokens on each vertex of the digraph. Each player, in their turn, chooses one vertex and removes at least its out-degree plus one tokens from the vertex. Furthermore, one token is added to each vertex to which a directed edge from the chosen vertex is connected. 
Figure \ref{fig:generalization} shows two directions of generalizations of {\sc Nim}.

In this paper, we show PSPACE-completeness of the winner determination problem of {\sc Digraph Yama
Nim}.
Furthermore, even when the input graph is both bipartite and directed acyclic, it is still PSPACE-complete.
On the other hand, we also show that if the digraph satisfies some certain conditions, there is a method to simplify the digraph, or to solve the winner 
determination problem.

This paper is constructed as follows: In the later part of this section, we prepare 
definitions and notations. In Section \ref{sec:Yama_nim}, we introduce the original {\sc Yama Nim}. 
In Section \ref{sec:DYN}, we introduce {\sc Digraph Yama
Nim}, and show the PSPACE-completeness of winner determination problem and some solvable cases for certain conditions.




\subsection{Preliminaries}

In this paper, 
the set of all non-negative integers is denoted by $\Zz=\{0,1,2,\ldots\}$, the in-degree and out-degree of a vertex $v$ are denoted by $d_\mathrm{in}(v)$ and $d_\mathrm{out}(v)$ respectively, and let $\bigoplus_ {k=1}^{n} x_k=x_1\oplus x_2\oplus \cdots\oplus x_n$ where $\oplus$ is an exclusive OR for binary notation. We call $\oplus$ nim-sum since this is used for determining the winner of classical Nim \cite{B02}.\\

\begin{definition}[Impartial ruleset]\label{def:impartial}
  An \textbf{impartial ruleset} is a pair $\Gamma=(M, f)$, where $M$ is the set of game positions, $f: M \to Pow(M)$, where $Pow(M)$ is the set of the subsets of $M$, is the option map which sends $\m \in M$ to its set of options $f(\m) \subset M$. 
  Moreover, we assume that our game is short, namely for any $\m\in M$, there exists $N_{\m} \in \Z_{\geq0}$ such that for any game-positions sequence, starting from $\m$ ends in at most $N_{\m}$ moves.
\end{definition} 
~
\begin{remark}
    In this paper, 
    we consider an impartial ruleset under normal play (last player wins).
\end{remark}
~
\begin{definition}[Terminal position]\label{def:terminal}
  If $\m \in M$ satisfies $f(\m)=\varnothing$, then $\m$ is called a \textbf{terminal position}. We write the set of terminal positions as $\mathcal{E}$.
\end{definition}
~
~
~
\begin{definition}[\Pp~and \Np]\label{def:PpNp}
Let $\Gamma=(M,f)$ be
an impartial ruleset. We call a position $\m\in M$ is a \Pp~if the previous player has a winning strategy. We call a position $\m\in M$ is an 
\Np~if the next player has a winning strategy.
\end{definition}
~

The next proposition is a basic property of \Pp~and \Np.\\
\begin{proposition}\label{normalprop:PpNp}
  Let $\Gamma=(M,f)$ be an impartial ruleset. For all $\m\in M$,
  we can determine which player has a winning strategy as follows:
  \begin{enumerate}
  \item[(i) ] If $f(\m) = \varnothing$, then $\m$ is a \Pp.
  \item[(ii) ] 
  If there exists 
  $\m' \in f(\m)$ such that $\m'$ is a \Pp, then $\m$ is an \Np.
  \item[(iii) ] 
  If for any $\m' \in f(\m)$, $\m'$ is an \Np, then $\m$ is a \Pp.
  \end{enumerate}
  Then, any position $\m$ is either a \Pp~or an \Np.
\end{proposition}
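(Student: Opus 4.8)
The plan is to establish rules (i)--(iii) by backward induction on the game tree and to observe that they then force the closing ``either/or'' assertion. First I would note that the three cases are jointly exhaustive and that (i) is really just the $f(\m)=\varnothing$ instance of (iii), since then \emph{every} option of $\m$ is vacuously an \Np; so it suffices to handle the dichotomy ``$f(\m)$ contains a \Pp'' versus ``every element of $f(\m)$ is an \Np''. Since the ruleset is short, the set of integers that bound the number of moves from $\m$ is a nonempty subset of $\Zz$ (Definition~\ref{def:impartial}), so I may take $N_{\m}$ to be its least element; then every $\m'\in f(\m)$ satisfies $N_{\m'}\le N_{\m}-1$, because prepending the move $\m\to\m'$ to any play from $\m'$ produces a play from $\m$. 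This licenses induction on $N_{\m}$.

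For the base case $N_{\m}=0$ we have $f(\m)=\varnothing$, so the player to move from $\m$ cannot move and loses under normal play; hence the previous player wins and $\m$ is a \Pp, which is exactly rule (i). For the inductive step, assume the statement for all positions with length bound strictly below $N_{\m}$, so each $\m'\in f(\m)$ is exactly one of a \Pp\ or an \Np. If some $\m'\in f(\m)$ is a \Pp, the player to move from $\m$ plays $\m\to\m'$; at $\m'$ the opponent is now ``the player to move'' at a \Pp, hence loses by the inductive hypothesis, so the mover from $\m$ wins and $\m$ is an \Np, which is rule (ii). If instead every $\m'\in f(\m)$ is an \Np, then after any move $\m\to\m'$ the opponent is ``the player to move'' at an \Np\ and wins by the inductive hypothesis; so the mover from $\m$ loses whatever they do, and $\m$ is a \Pp, which is rule (iii). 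Because the hypotheses of (ii) and (iii) are mutually exclusive and exhaustive over the options of $\m$, exactly one label is assigned at each step, and the induction closes; in particular every $\m$ is a \Pp\ or an \Np.

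The step I expect to need the most care is making ``winning strategy'' precise enough that the phrases ``the mover from $\m$ wins'' and ``the player to move at $\m'$ loses'' can be matched across a single move --- essentially the Zermelo-type fact that in a short game exactly one of the two players has a winning strategy (equivalently, a position cannot be simultaneously a \Pp\ and an \Np). Once that is pinned down --- and it itself follows by the same induction, since any play jointly determined by a strategy for each player terminates with a single last move made by exactly one player --- the rest is the routine bookkeeping of whose turn it is in the argument above.
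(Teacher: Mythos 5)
The paper states this proposition without proof, treating it as a standard fact of the theory, so there is no in-paper argument to compare against; your proof supplies exactly the standard argument that is implicitly being invoked. Your backward induction on the minimal play-length bound $N_{\m}$ guaranteed by the shortness condition in Definition~\ref{def:impartial} is correct: the observation that $N_{\m'}\le N_{\m}-1$ for every option $\m'$ licenses the induction, the base case recovers (i), the two inductive cases recover (ii) and (iii), and you rightly flag the one point that genuinely needs attention, namely that a position cannot simultaneously be a \Pp\ and an \Np\ (which follows by pitting the two purported winning strategies against each other in a necessarily finite play). No gaps.
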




~
Sprague and Grundy extended Bouton's theory for general impartial  
rulesets
under normal play and 
introduced
Sprague-Grundy values \cite{SP35,GR39}.
\begin{definition}[Mex]
\label{mex}
Let $X$ be a finite subset of non-negative integers.
Then the \emph{minimal excluded number $\mathrm{mex}(X)$} is 
$$\mathrm{mex}(X):=\min(\mathbb{Z}_{\geq 0} \setminus X).$$
\end{definition}
\begin{definition}[Sprague-Grundy value (SG-value)]
\label{Grundy}
Let $\Gamma = (M, f)$ be an impartial ruleset.
For any $\m \in M$, the \emph{Sprague-Grundy value} $g(\m)$ is
$$g(\m):= \mathrm{mex}(\{g({\m}') \mid \m'\in f(\m)\}).$$
\end{definition}

With regard to SG-values and winning strategies, the following proposition is known.

~
\begin{proposition}
\label{Sprague-Grundy}
Let $\Gamma = (M, f)$ be an impartial ruleset.
For any $\m \in M$, we have the following properties.
 \begin{enumerate}
    \item[\text{(1)} ] $\m$ is a \Pp~if and only if $g(\m) = 0$;
    \item[\text{(2)} ] $\m$ is an \Np~if and only if $g(\m) \neq 0$.
 \end{enumerate}
\end{proposition}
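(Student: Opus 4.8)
The plan is to argue by induction on the shortness bound $N_{\m}$ from Definition \ref{def:impartial}, exploiting that every option $\m' \in f(\m)$ satisfies $N_{\m'} \leq N_{\m} - 1$, so the induction is well founded. I will also use that, by Proposition \ref{normalprop:PpNp}, every $\m \in M$ is either a \Pp~or an \Np, and that these two alternatives are mutually exclusive (the previous and next players cannot both possess a winning strategy); consequently items (1) and (2) are logically equivalent, and it suffices to establish the implications $g(\m) = 0 \Rightarrow \m \in \PP$ and $g(\m) \neq 0 \Rightarrow \m \in \NN$, the converses then following by contraposition from the partition $M = \PP \sqcup \NN$.

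For the base case $N_{\m} = 0$ we have $f(\m) = \varnothing$, hence $g(\m) = \Mex(\varnothing) = 0$ and $\m$ is a \Pp~by Proposition \ref{normalprop:PpNp}(i). For the inductive step, assume the statement for all positions with bound below $N_{\m}$, in particular for every $\m' \in f(\m)$, and set $S = \{g(\m') \mid \m' \in f(\m)\}$, so that $g(\m) = \Mex(S)$. If $g(\m) = 0$ then $0 \notin S$, so no option has SG-value $0$; by the induction hypothesis no option is a \Pp, i.e.\ every option is an \Np, and Proposition \ref{normalprop:PpNp}(iii) yields $\m \in \PP$. If instead $g(\m) \neq 0$ then $0 \in S$ by the definition of mex, so some option $\m' \in f(\m)$ has $g(\m') = 0$; by the induction hypothesis $\m'$ is a \Pp, and Proposition \ref{normalprop:PpNp}(ii) yields $\m \in \NN$. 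This completes the induction.

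The proof is essentially bookkeeping, so I do not expect a real obstacle; the only point deserving attention is the legitimacy of the induction, which is precisely what the shortness hypothesis in Definition \ref{def:impartial} guarantees — the same hypothesis that makes the recursive definition of $g$ in Definition \ref{Grundy} terminate, so that $g(\m)$ is well defined in the first place.
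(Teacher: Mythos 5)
Your proof is correct: the induction on the shortness bound, combined with the dichotomy and the two implications of Proposition \ref{normalprop:PpNp}, is the standard and complete argument, and the key observation that $g(\m)\neq 0$ forces $0$ to lie in $\{g(\m')\mid \m'\in f(\m)\}$ is exactly right. The paper itself gives no proof of this proposition --- it is stated as a known result --- so there is nothing to compare against; your write-up would serve as a valid proof, the only cosmetic caveat being that one should take $N_{\m}$ to be the minimal bound so that options strictly decrease it, which you implicitly do.
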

~

From Definition \ref{Grundy}, SG-values can be characterised by the following proposition. 

~
\begin{proposition}
\label{Grundyprop1}
Let $\Gamma = (M, f)$ be an impartial ruleset. 
We assume that $M$ decomposes into disjoint subsets 
$$M=\coprod_{i\in \Zz} M_i=M_0 \sqcup M_1 \sqcup M_2 \sqcup \cdots,$$
in such a way that for any $\x \in M_i$ and its option $\x' \in f(\x)$, we have $\x' \notin M_i$,
 and for any $\x \in M_k$ with $0\leq i<k$, there exists an option $\x' \in f(\x)$ such that $\x' \in M_i$. Then, $M_i$ is exactly the set of positions with their
 SG-values equal to $i$.\\
\end{proposition}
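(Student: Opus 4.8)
The plan is to show that the decomposition $M=\coprod_{i\in\Zz}M_i$ satisfies exactly the two characterizing properties of the Sprague–Grundy function given in Definition \ref{Grundy}, and then invoke the uniqueness of the function defined by that mex-recursion on a short game. Concretely, I would define $g$ to be the genuine SG-function of $\Gamma$ and prove by induction (on the rank $N_{\m}$ guaranteed by shortness in Definition \ref{def:impartial}) that $\m\in M_i$ implies $g(\m)=i$. The two hypotheses on the $M_i$ are precisely what is needed: the first ("no option of $\x\in M_i$ lies in $M_i$") guarantees that $i\notin\{g(\x')\mid \x'\in f(\x)\}$, so $g(\x)\le i$ is not forced to fail from below at level $i$; the second ("for every $k>i$ and every $\x\in M_k$ there is an option in $M_i$") guarantees that every value $i<k$ is actually attained among the options of $\x\in M_k$, so $g(\x)\ge k$.

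The key steps, in order, are: (1) Fix $\m$ and assume inductively that the claim holds for all positions reachable from $\m$ in fewer moves; in particular it holds for every $\x'\in f(\m)$. (2) Suppose $\m\in M_k$. For each $\x'\in f(\m)$, the first hypothesis gives $\x'\notin M_k$, so $\x'\in M_j$ for some $j\neq k$, and by induction $g(\x')=j\neq k$; hence $k\notin\{g(\x')\mid\x'\in f(\m)\}$. (3) For each $i$ with $0\le i<k$, the second hypothesis gives an option $\x'\in f(\m)\cap M_i$, and by induction $g(\x')=i$; hence $\{0,1,\dots,k-1\}\subseteq\{g(\x')\mid\x'\in f(\m)\}$. (4) Combining, $\mathrm{mex}(\{g(\x')\mid\x'\in f(\m)\})=k$, i.e. $g(\m)=k$. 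This shows $M_k\subseteq g^{-1}(k)$ for every $k$; since the $M_k$ are pairwise disjoint and $g$ is a well-defined function (every position has exactly one SG-value), the reverse inclusion and hence equality $M_k=g^{-1}(k)$ follows automatically. The base case is the terminal positions: if $f(\m)=\varnothing$ then $\m\in M_0$ is forced (it cannot lie in any $M_k$ with $k\ge 1$, since that would require an option in $M_0$), and $g(\m)=\mathrm{mex}(\varnothing)=0$, consistent with the inductive claim.

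The only real subtlety — and the step I would be most careful about — is making the induction well-founded: a position of $M_k$ need not have options only in $M_0,\dots,M_{k-1}$ (it may also have options in higher-indexed $M_j$), so the induction cannot be run on the index $i$ but must be run on the combinatorial game rank $N_{\m}$ from Definition \ref{def:impartial}; one should note that every option $\x'\in f(\m)$ satisfies $N_{\x'}<N_{\m}$, which is exactly what shortness provides. A secondary point worth stating explicitly is that the hypotheses force $M$ to be covered by the $M_i$ in a consistent way — in particular every terminal position lands in $M_0$ — so that the decomposition is compatible with $g$ on all of $M$; once that is checked the argument above closes. No genuine obstacle arises beyond bookkeeping; the proposition is essentially a restatement of the mex-recursion packaged as a partition, and the proof is a direct two-way mex estimate plus induction on rank.
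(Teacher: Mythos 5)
The paper states Proposition~\ref{Grundyprop1} without proof, so there is nothing to compare against; judged on its own, your argument is correct and complete. You correctly identify the one genuine subtlety: the induction must run on the game rank $N_{\m}$ supplied by shortness (Definition~\ref{def:impartial}), not on the index $k$, since an option of a position in $M_k$ may lie in some $M_j$ with $j>k$. Your handling of the base case (terminal positions are forced into $M_0$ by the second hypothesis) and the two-sided mex estimate in steps (2)--(4) are exactly right. One small wording point: in the final step, what upgrades $M_k\subseteq g^{-1}(k)$ to equality is that the $M_i$ \emph{cover} $M$ (which is part of the hypothesis $M=\coprod_i M_i$), together with disjointness; disjointness and single-valuedness of $g$ alone would not suffice.
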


SG-values can be applied to disjunctive sum of rulesets.

~

\begin{definition}[Disjunctive Sum]
\label{def:sumgame}
Let $X=(M_1,f_1), Y=(M_2,f_2)$ be impartial rulesets. The disjunctive sum $X + Y$ is an impartial ruleset $(M,f)$ where the set of game positions is $M=M_1 \times M_2$ and for any $\x\in M_1$ and for any $\y \in M_2$, the option map $f(\x,\y)$ is
\begin{eqnarray*}
       f(\x,\y) = \{(\x', \y)\in M\, |\, \x' \in f_1(\x)\}  \cup \{(\x, \y')\in M\, |\, \y' \in f_2(\y)\}.
\end{eqnarray*}
\end{definition}
~
\begin{theorem}[Sprague-Grundy Theorem \cite{SP35,GR39}]
\label{thm:Sprague-Grundygeneral}
Let $X + Y=(M,f)$ be an impartial ruleset. For any $\x\in M_1$ and for any $\y \in M_2$, we have
$$g(\x+\y)= g(\x) \oplus  g(\y).$$

\end{theorem}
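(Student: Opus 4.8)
The plan is to deduce the theorem from Proposition \ref{Grundyprop1}, applied to the game $X+Y$, by choosing the right decomposition of its position set. For each $i\in\Zz$ I would set
$$M_i := \{(\x,\y)\in M_1\times M_2 \mid g(\x)\oplus g(\y) = i\},$$
so that $M=\coprod_{i\in\Zz}M_i$ is a disjoint decomposition of $M=M_1\times M_2$. It then suffices to verify the two hypotheses of Proposition \ref{Grundyprop1}: first, that every option of a position in $M_i$ leaves $M_i$; and second, that from any position in $M_k$ and any $i$ with $0\le i<k$, some option lands in $M_i$. Granting these, Proposition \ref{Grundyprop1} identifies $M_i$ with the set of positions of SG-value $i$, which is exactly the assertion $g(\x+\y)=g(\x)\oplus g(\y)$.

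For the first hypothesis, fix $(\x,\y)\in M_i$. By Definition \ref{def:sumgame} an option of $(\x,\y)$ is either $(\x',\y)$ with $\x'\in f_1(\x)$, or $(\x,\y')$ with $\y'\in f_2(\y)$. In the former case, Definition \ref{Grundy} gives $g(\x)=\mathrm{mex}\{g(\x'')\mid\x''\in f_1(\x)\}$, so $g(\x')\ne g(\x)$, whence $g(\x')\oplus g(\y)\ne g(\x)\oplus g(\y)=i$ and therefore $(\x',\y)\notin M_i$. The latter case is symmetric. This step uses nothing beyond the definitions.

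For the second hypothesis I would run the classical nim-addition argument. Write $a=g(\x)$ and $b=g(\y)$, so that $(\x,\y)\in M_k$ means $a\oplus b=k$; fix $i<k$. Let $p$ be the most significant bit position in which $i$ and $k$ differ; since $i<k$, bit $p$ of $k$ is $1$ and bit $p$ of $i$ is $0$, so bit $p$ of $a\oplus b$ is $1$, and hence exactly one of $a,b$ has bit $p$ equal to $1$ — say $a$ does (otherwise swap the roles of $X$ and $Y$ and move in $Y$ instead). Put $t:=a\oplus i\oplus k$. Above bit $p$, $i$ and $k$ agree, so $i\oplus k$ vanishes there and $t$ agrees with $a$; at bit $p$, both $a$ and $i\oplus k$ have a $1$, so $t$ has a $0$ there while $a$ has a $1$. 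Hence $t<a=g(\x)$, so by the mex property in Definition \ref{Grundy} there is an option $\x'\in f_1(\x)$ with $g(\x')=t$. Then $(\x',\y)$ is an option of $(\x,\y)$ and
$$g(\x')\oplus g(\y)=t\oplus b=a\oplus b\oplus i\oplus k=k\oplus i\oplus k=i,$$
so $(\x',\y)\in M_i$, as required.

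I expect the only delicate point to be the bit-comparison establishing $t<a$ (the heart of nim-addition); everything else is routine unwinding of Definitions \ref{Grundy} and \ref{def:sumgame}. As a by-product, specialising to $k=0$ and combining with Proposition \ref{Sprague-Grundy} recovers the familiar fact that $(\x,\y)$ is a \Pp~exactly when $g(\x)=g(\y)$.
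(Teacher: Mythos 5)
Your proof is correct. Note that the paper itself does not prove Theorem \ref{thm:Sprague-Grundygeneral}; it is stated as a classical result with citations to Sprague and Grundy, so there is no in-paper argument to compare against. Your route --- decomposing $M_1\times M_2$ into the sets $M_i=\{(\x,\y)\mid g(\x)\oplus g(\y)=i\}$ and checking the two hypotheses of Proposition \ref{Grundyprop1} --- is the standard textbook proof, and it fits naturally with how the paper uses Proposition \ref{Grundyprop1} elsewhere (e.g.\ in Theorems \ref{yama1} and \ref{Thm:DYNodd}). Both verification steps are sound: the first is the injectivity of $\cdot\oplus g(\y)$ combined with $g(\x')\neq g(\x)$ from the mex definition, and the second is the usual highest-differing-bit argument showing $t=a\oplus i\oplus k<a$, so that $t$ is realised as the SG-value of some option. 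No gaps.
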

\section{Yama Nim}
\label{sec:Yama_nim}
In this section, we introduce the game {\sc Yama Nim} studied in \cite{Y23}. Since the thesis is written in Japanese, we show not only the statement of theorems but also the proofs of them in English.
In particular, in this paper, we show
a different way to show SG-values of {\sc Yama Nim}.

In this paper, 
we simply write $f(x,y)$ for $f((x,y))$ and $g(x,y)$ for $g((x,y))$.\\
\begin{definition}[{\sc Yama Nim}]
\textbf{\sc Yama Nim} is an impartial ruleset $\Gamma = (\Z_{\geq 0} \times \Z_{\geq 0}, f)$ such that
\begin{eqnarray*}
\label{Eqn:f_function_Yama}
       f(x,y) &=& \{(x-i, y+1)\in M\, |\, 2\le i\le x\} \\
                & & \cup \{(x+1, y-i)\in M\, |\,  2\le i \le y\}.
\end{eqnarray*}
\end{definition}

\begin{theorem}
\label{YamaPps}
The set of \Pps~of {\sc Yama Nim} $Y$ is
$$Y=\{(x, y)\in M\, |\, |x-y| \leq 1\}.$$
\end{theorem}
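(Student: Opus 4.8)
The plan is to verify the two defining properties of the set $Y=\{(x,y)\mid |x-y|\le 1\}$ that characterize the $\mathcal{P}$-positions via Proposition~\ref{normalprop:PpNp}: namely, that every option of a position in $Y$ lies outside $Y$, and that every position not in $Y$ has at least one option in $Y$. First I would set $\mathcal{N}' = M\setminus Y = \{(x,y)\mid |x-y|\ge 2\}$ and dispose of the terminal case: the only terminal position is $(0,0)$ (and $(1,0)$, $(0,1)$, $(1,1)$ all have no legal move as well, since removing at least two tokens requires a pile of size at least $2$), all of which lie in $Y$; this is consistent with claim (i) of Proposition~\ref{normalprop:PpNp}.

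For the first property (no move stays inside $Y$), take $(x,y)\in Y$ with, say, a move on the first pile: it goes to $(x-i,y+1)$ with $2\le i\le x$. Since $|x-y|\le 1$ we have $x\le y+1$, so $x-i \le y+1-i \le y-1 < y+1$, giving $(y+1)-(x-i) \ge i \ge 2$; hence the new position has coordinate difference at least $2$ and lies outside $Y$. The move on the second pile is symmetric. So every option of a position in $Y$ is in $\mathcal{N}'$.

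For the second property (every position in $\mathcal{N}'$ has an option in $Y$), take $(x,y)$ with $x-y\ge 2$ (the case $y-x\ge 2$ is symmetric). The natural move is to remove tokens from the larger pile $x$: go to $(x-i, y+1)$ and choose $i$ so that $|(x-i)-(y+1)|\le 1$, i.e.\ $x-y-2 \le i-1 \le x-y$, equivalently $x-y-1 \le i \le x-y+1$. We need such an $i$ with $2\le i\le x$. Since $x-y\ge 2$ we have $x-y-1\ge 1$, and we may take $i = x-y$ when $x-y\ge 2$ (so $i\ge 2$, and $i = x-y \le x$ since $y\ge 0$); this lands on $(y, y+1)\in Y$. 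This establishes that every $\mathcal{N}'$ position moves to $Y$.

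The argument is essentially elementary, so I do not expect a serious obstacle; the one point requiring a little care is the boundary bookkeeping — making sure the chosen $i$ satisfies both $i\ge 2$ and $i\le x$ in every subcase (in particular when $y=0$ or when $x-y$ is exactly $2$), and checking that the small positions $(0,0),(1,0),(0,1),(1,1)$ are correctly treated as $\mathcal{P}$-positions rather than spuriously excluded. Once both properties are verified, Proposition~\ref{normalprop:PpNp} (together with the uniqueness of the $\mathcal{P}/\mathcal{N}$ labelling) immediately yields that $Y$ is exactly the set of $\mathcal{P}$-positions.
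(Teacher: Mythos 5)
Your proof is correct and follows essentially the same approach as the paper's: verifying that every option of a position in $Y$ leaves $Y$ and that every position outside $Y$ has a move into $Y$, with the same witnessing move (removing $|x-y|$ tokens from the larger pile to land on a position with coordinate difference $1$). The extra bookkeeping you add about terminal positions and the bounds $2\le i\le x$ is harmless and consistent with the paper's (more terse) argument.
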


\begin{proof}
  We need to show that
 
 \begin{enumerate}
    \item[\text{(\RMi)} ] For any $(x,y) \in Y$, its option $(x',y')$ is not in $Y$;
    \item[\text{(\RMii)} ] For any $(x,y) \notin Y$, there is an option $(x',y') \in Y$.
  \end{enumerate}
  
For (\RMi), we take $(x,y) \in Y$.
Without loss of generality, we may move to $(x-i,y+1)$ with $i \geq 2$, then 
\begin{equation*}
       |(x-i)-(y+1)| = |(x - y)-(i+1)| \geq |i+1|-|x-y| \geq 2,
\end{equation*}
hence $(x-i,y+1) \notin Y$. 

For (\RMii), we take $(x,y) \notin Y$ and without loss of generality,
we may assume  $y - x \geq 2$, then we can move to
$$ (x+1,y-(y-x))=(x+1,x)\in Y.$$
\end{proof}

Next, we 
describe the SG-value of {\sc Yama Nim}.

\begin{theorem}
\label{yama1} The SG-value of {\sc Yama Nim} is
$$g(x,y)=\left\{
\begin{array}{cc} 
0& (|x-y|\le 1)\\ 
\min(x, y)+1 & (|x-y|>1).
\end{array}
\right.$$
\end{theorem}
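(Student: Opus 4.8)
The plan is to read the candidate stratification off the formula and feed it into Proposition~\ref{Grundyprop1}. Put $M_0 = Y = \{(x,y) : |x-y| \le 1\}$, the set of \Pps{} identified in Theorem~\ref{YamaPps}, and for each integer $i \ge 1$ put
$$M_i = \{(x,y) \in \Zz\times\Zz : |x-y| \ge 2,\ \min(x,y) = i-1\}.$$
These sets are pairwise disjoint and their union is all of $M$: a position with $|x-y|\le 1$ lies in $M_0$, and any other position $(x,y)$ lies in $M_{\min(x,y)+1}$. By Proposition~\ref{Grundyprop1} it then remains to verify (a) for every $(x,y)\in M_i$ and every option $(x',y')$ we have $(x',y')\notin M_i$; and (b) for every $(x,y)\in M_k$ and every $i$ with $0\le i<k$, some option lies in $M_i$. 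Since swapping the two coordinates is an automorphism of the ruleset that preserves every $M_i$, in both parts I may assume $x\le y$; a position of $M_i$ with $i\ge1$ then has the form $(i-1,y)$ with $y\ge i+1$.

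For (a) with $i=0$ this is exactly item~(\RMi) of the proof of Theorem~\ref{YamaPps}. For $i\ge1$: a move in the first pile (available only when $i-1\ge2$) produces $(i-1-j,\,y+1)$ with $2\le j\le i-1$, whose smaller entry is $i-1-j\le i-3$, so the resulting position lies in $M_0$ or in some $M_{i'}$ with $i'\le i-2$. A move in the second pile produces $(i,\,y-j)$ with $2\le j\le y$; its smaller entry is $i$ when $y-j\ge i$, and otherwise it equals $y-j\le i-1$ — and if $y-j=i-1$ the position is $(i,i-1)\in M_0$, while if $y-j\le i-2$ it lies in $M_0$ or some $M_{i'}$ with $i'\le i-1$. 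In every case the option avoids $M_i$.

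For (b), start from $(k-1,y)$ with $y\ge k+1$ and fix $i$ with $0\le i<k$. To land in $M_0$, remove $y-k+1$ (which is $\ge 2$ since $y\ge k+1$, and $\le y$) tokens from the second pile, reaching $(k,k-1)$. To land in $M_i$ with $1\le i\le k-2$, remove $k-i$ (which is $\ge2$ and $\le k-1$) tokens from the first pile, reaching $(i-1,y+1)$, whose smaller entry is $i-1$ and whose entries differ by at least $2$. To land in $M_{k-1}$ — the one value of $i$ for which a first-pile move is unavailable — remove $y-k+2$ (which is $\ge 2$ and $\le y$) tokens from the second pile, reaching $(k,k-2)\in M_{k-1}$. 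This exhausts the cases, so Proposition~\ref{Grundyprop1} gives $g(x,y)=i$ on $M_i$, which is the stated formula.

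The only delicate part is the bookkeeping of the inequalities $2 \le (\text{tokens removed}) \le (\text{pile size})$ in the boundary cases — notably $y=k+1$ and $i=k-1$ in part (b) — together with the step in part (a) showing that a second-pile move from $M_i$ cannot re-enter $M_i$; the key observation there is that such a move raises the smaller coordinate to $i$, so returning to $\min=i-1$ would force the other coordinate to equal $i-1$ and hence make the difference exactly $1$. I expect no conceptual difficulty beyond this.
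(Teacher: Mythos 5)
Your proposal is correct and follows essentially the same route as the paper: the identical stratification $M_i=\{(x,y): |x-y|\ge 2,\ \min(x,y)=i-1\}$ fed into Proposition~\ref{Grundyprop1}, with the $i=0$ layer delegated to Theorem~\ref{YamaPps}. The only cosmetic difference is in part (b), where the paper reaches $M_i$ for every $1\le i<k$ by the single second-pile move to $(k,i-1)$, while you split into a first-pile move for $i\le k-2$ and a second-pile move for $i=k-1$; both witness moves check out.
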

\begin{proof} 
Let $Y_0=\left\{ (x, y) \,\big|\, |x-y|\le 1 \right\}$ and, for $i>0$, let $Y_i=\left\{(x, y) \,\big|\, |x-y| > 1, \min(x, y)+1=i \right\}$.
By Proposition \ref{Grundyprop1}, we need to show that
 \begin{enumerate}
    \item[\text{(\RMi)} ] For any $(x,y) \in Y_i~~(i\geq0)$, its option $(x',y')$ is not in $Y_i$;
    \item[\text{(\RMii)} ] If $i<k$, for any $(x,y) \in Y_{k}$, there is an option $(x',y') \in Y_i$. 
 \end{enumerate}

The case $i=0$ was shown in Theorem \ref{YamaPps}, so we may assume $i>0$.

For (\RMi), we take $(x, y)\in Y_i$, and assume that $(x', y')$ is an option of $(x,y)$.
We may assume $x<y$ with $y-x>1$.  
Then we have either $x' \leq x-2$ and $y'=y+1$, or $x'=x+1$ and $y'\leq y-2$. In the first case, we have $\min(x', y') < \min(x, y+1)$, hence $(x', y')\notin Y_i$.
In the second case, if $\min(x', y')=\min(x, y)$, then we need to have $y'=x$, but $(x', y')=(x+1,x)\in Y_0$, not in $Y_i$.

For (\RMii), we take $(x, y)\in Y_k$ with $k>i$. 
We may assume $k-1=x<y$ with $y-x>1$.
Then, we can move to $(x+1, y-(y-(i-1))=(x+1,i-1) \in Y_i$ because
$$y-(i-1)>y-(k-1)=y-x>1,$$
$$(x+1)-(i-1)=(k-1+1)-(i-1) = (k-i)+1 \geq 2.$$
\end{proof}

{\sc Yama Nim} under mis\`ere play is also considered in \cite{Y23}.


\section{Digraph Yama Nim}
\label{sec:DYN}

In this section, firstly we introduce \dyn. 

\begin{definition}[\dyn]
The rules of \textbf{\dyn} is as follows.

There is a digraph $G = (V, E), V = \{v_1, \cdots, v_n\}$
and tokens are placed on each vertex.
The player chooses a vertex $v_i \in V$ and takes at least $d_\mathrm{out}(v_i)+1$ tokens.
At the same time, the player adds one token to all out-neighbors of
$v_i$.

In other words, \dyn~is a ruleset $\Gamma=(M,f)$ such that

$$M = (\Zz)^n,$$
$$f(x_1, \cdots , x_n) = \left\{(x'_{1}, \ldots, x'_{n}) \middle | 
    \begin{array}{l}
    \text{for an}~i\in \{1,2,\cdots n\}, 0\le x'_{i}<x_{i} - d_\mathrm{out}(v_i), \\
      \text{for any}~j~\text{which satisfies}~(v_i, v_j)\in E, x'_{j} = x_{j} + 1, \\
      otherwise, x'_{k} = x_{k}
     \end{array}
    \right\},$$
    where $x_i$ and $x'_i$ are the numbers 
    of tokens on vertex $v_i$.\\
\end{definition}

\begin{remark}
    If a vertex does not have any outgoing edges,
    we can take an arbitrary positive number of tokens, but up to the number of tokens on the vertex, from that vertex.\\
\end{remark}

{\dyn} is a generalization of classical {\sc Nim} and {\sc Yama Nim}.\\
\begin{example}
\label{rem:nim}
    Let $G=(V,E)$ be a graph on $n$ vertices with no directed edges. Then, \dyn~on $G$ represents {\sc Nim} with $n$ piles~(See Figure \ref{fig:generalization}).\\
\end{example}

\begin{example}
\label{rem:yama}
 Let $G = (V, E)$ be a digraph, where $V = \{v_1, v_2\}$ and $E = \{(v_1, v_2), (v_2, v_1)\}$.
 Then, 
  \dyn~on $G$ represents {\sc Yama Nim}
 ~(See Figure \ref{fig:generalization}).
\end{example}


\subsection{PSPACE-completeness of \dyn}
\label{subsec:Complex}



\medskip

In this section, we prove that the winner determination problem of \dyn\ is PSPACE-complete.
By the convention, the name of a ruleset is also used as the name of the decision problem of determining the winner.

PSPACE-completeness of \dyn\ is proved by a reduction from \gpos.
\gpos\ is played on a positive CNF formula $F$.
Players ($P_1$ and $P_0$) alternately select a variable of $F$ which has not been selected ever. 
The variables selected by $P_1$ are assigned the value 1, and those selected by $P_0$ are assigned the value 0.
After all variable assignments have been fixed,  $P_1$ wins if $F = 1$, and $P_0$ wins if $F = 0$.
\gpos\ is known to be PSPACE-complete~\cite{S78}.


\medskip

\begin{theorem}
    \dyn\ is PSPACE-complete even when the input graph is bipartite and directed acyclic.
\end{theorem}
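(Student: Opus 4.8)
The plan is to prove the two directions separately: membership in \textsc{PSPACE}, which holds for arbitrary inputs, and \textsc{PSPACE}-hardness for bipartite directed acyclic inputs, the latter by a polynomial-time reduction from \gpos{}.

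Membership in \textsc{PSPACE} is routine and I would only sketch it: by Definition~\ref{def:impartial} the game is short, a position is just a vector of token counts and hence has polynomial size, and since every move strictly decreases the total number of tokens, play always terminates; therefore the predicate ``\m{} is an \Np{}'' can be evaluated by the backward recursion of Proposition~\ref{normalprop:PpNp} run as an alternating polynomial-time procedure, which lies in \textsc{PSPACE}. This uses no structural hypothesis on the graph, so in particular it covers bipartite directed acyclic inputs.

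For hardness, let $F = C_1 \wedge \cdots \wedge C_m$ be a positive CNF formula on variables $z_1, \dots, z_n$, an instance of \gpos{}. I would construct a \dyn{} position on a bipartite digraph whose left part carries a ``variable gadget'' for each $z_j$ and whose right part carries a ``clause vertex'' for each $C_i$, with the only edges running from variable gadgets to the clause vertices they occur in; all edges then point left-to-right, so the digraph is bipartite and directed acyclic. The token counts would be tuned so that: (i) the clause vertices start empty and each variable gadget is initially armed with just enough tokens to be playable in essentially one way, which pushes a single token onto every clause vertex containing that variable; (ii) a few auxiliary control vertices together with large ``penalty'' token stacks force honest play to split into a \emph{selection phase}, in which the players are compelled to use the variable gadgets one at a time, mirroring the alternating selection of variables in \gpos{}, followed by a \emph{resolution phase}; and (iii) the resolution phase is a small \textsc{Nim}-type subposition whose \textsc{P}/\textsc{N} status (read off via the Sprague--Grundy theory of Theorem~\ref{thm:Sprague-Grundygeneral}) is \textsc{P} precisely when every clause vertex has received a token from a ``first-player'' selection, i.e.\ precisely when the assignment produced by the play satisfies $F$. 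Any deviation from this script would let the opponent reach a penalty stack and win outright, so optimal play in the constructed position corresponds exactly to optimal play in \gpos{}, and the construction is clearly computable in polynomial time.

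The main obstacle is step~(ii): \dyn{} is impartial, so both players share the same legal moves, whereas in \gpos{} the two players pursue opposite goals (claiming a transversal of the clauses versus claiming some clause entirely). The partisan-to-impartial conversion must therefore be carried entirely through the parity of the total number of moves and through penalty gadgets that make every off-script move immediately losing; and the bipartite-acyclic restriction is an additional headache, since it rules out the usual trick of routing a control token around a directed cycle to enforce turn order, so the order of play has to be pinned down solely by the acyclic flow of tokens and the chosen stack sizes. Once the gadgets are fixed, the remaining work is a finite case analysis checking faithfulness in \emph{both} directions — a winning strategy for $P_1$ in \gpos{} yields a winning strategy for the appropriate player in \dyn{}, and conversely — which is the technical heart of the argument but otherwise mechanical.
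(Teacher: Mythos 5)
Your framing of the problem is right (reduction from \gpos, variables on one side, clauses on the other, all edges pointing from variable gadgets to clause vertices so the digraph is bipartite and acyclic), but the proof has a genuine gap exactly where you flag ``the main obstacle'': step~(ii) is never actually carried out. The ``auxiliary control vertices,'' ``penalty token stacks,'' and the mechanism that ``compels'' a selection phase followed by a resolution phase are asserted, not constructed, and it is far from clear they can be built under the bipartite-DAG constraint you impose. Worse, the target you set for step~(iii) --- a subposition that is a \Pp{} ``precisely when every clause vertex has received a token from a first-player selection'' --- is not well posed in an impartial game: tokens carry no record of which player delivered them, so no gadget can test that property. What a clause vertex \emph{can} detect is only how many tokens it received, i.e.\ whether some variable in it was assigned~$1$.

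The paper's construction resolves this by abandoning the idea of forcing turn order altogether. Each variable vertex $x_i$ starts with $2(d_\mathrm{out}(x_i)+1)$ tokens, so it is played either ``greedily'' once (assignment~$0$, sending one token to each incident clause) or ``gently'' twice (assignment~$1$, sending two tokens); a clause vertex $c_j$ is given out-degree $|C_j|+1$ and one initial token, so it becomes playable iff it receives at least $|C_j|+2$ tokens, i.e.\ iff at least one of its variables is set to~$1$. Crucially, every gadget is padded with short directed paths (plus a parity vertex when $n$ is odd) so that the total number of moves made inside the variable part and inside the clause part is \emph{always even, under any line of play} (the paper's Claim~1). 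Hence the parity of the whole game --- and therefore the winner --- is decided solely by whether the single sink-side vertex $w$ (out-degree $m-1$, needing one token from every clause) ever becomes playable, which happens iff all clauses are satisfied. This parity invariant is the missing idea in your proposal: it replaces both the penalty gadgets and the phase-enforcement machinery, and it is what makes the reduction work without any cyclic control structure. Without something playing this role, your argument does not go through as written.
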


\medskip


\begin{proof}
We construct a reduction graph $G$ from $F$ such that the first player wins in \dyn\ on $G$ if and only if $P_1$, the first player, wins in \gpos\ on $F$.
Let $X_1, \ldots, X_n$ be the variables and $C_1, \ldots, C_m$ be the clauses of $F$.
We define $|C_j|$ as the number of variables in $C_j$.

$G$ consists of subgraphs $G_1$, $G_2$, and $G_3$ as shown in Figure \ref{fig:pspace-c}.
$G_1$ contains vertices $x_1, \ldots, x_n$, which vertices are pairwise non-adjacent.
For each $1 \leq i \leq n$, a directed path $Q_i$ extends from $x_i$, where $|Q_i| = 3$. We define the first vertex of the path $Q_i$ as $y_i$.
If $n$ is odd, add an isolated vertex called the \emph{parity vertex} and extend a directed edge from the end of $Q_1$.
%
$G_2$ contains vertices $c_1, \ldots, c_m$, which vertices are pairwise non-adjacent.
For each $1 \leq j \leq m$, $|C_j| - 1$ directed paths of length 2 extend from $c_j$.
Every $c_j$ also have an outgoing edge to a vertex $f_j$.
In $G_3$, there is a vertex $w$, and $m-1$ directed paths of the length 2 extend from $w$.
We next describe the edges among $G_1$, $G_2$, and $G_3$.
There is a directed edge $(x_i,c_j)$ if in $F$ a variable $X_i$ is included in $C_j$, and every $c_j$ has an outgoing edge to $w$.

The number of tokens on each vertex is as follows:
\begin{itemize}
    \item For $1 \leq i \leq n$, $x_i$ has $2(d_\mathrm{out}(x_i)+1)$ tokens.
    \item For $1 \leq j \leq m$, $c_j$ has one token.
    \item If the parity vertex exists, it has one token.
    \item None of the other vertices have any tokens. 
\end{itemize}


 \begin{figure}[htbp]
  \centering
  \includegraphics[width=1\linewidth]{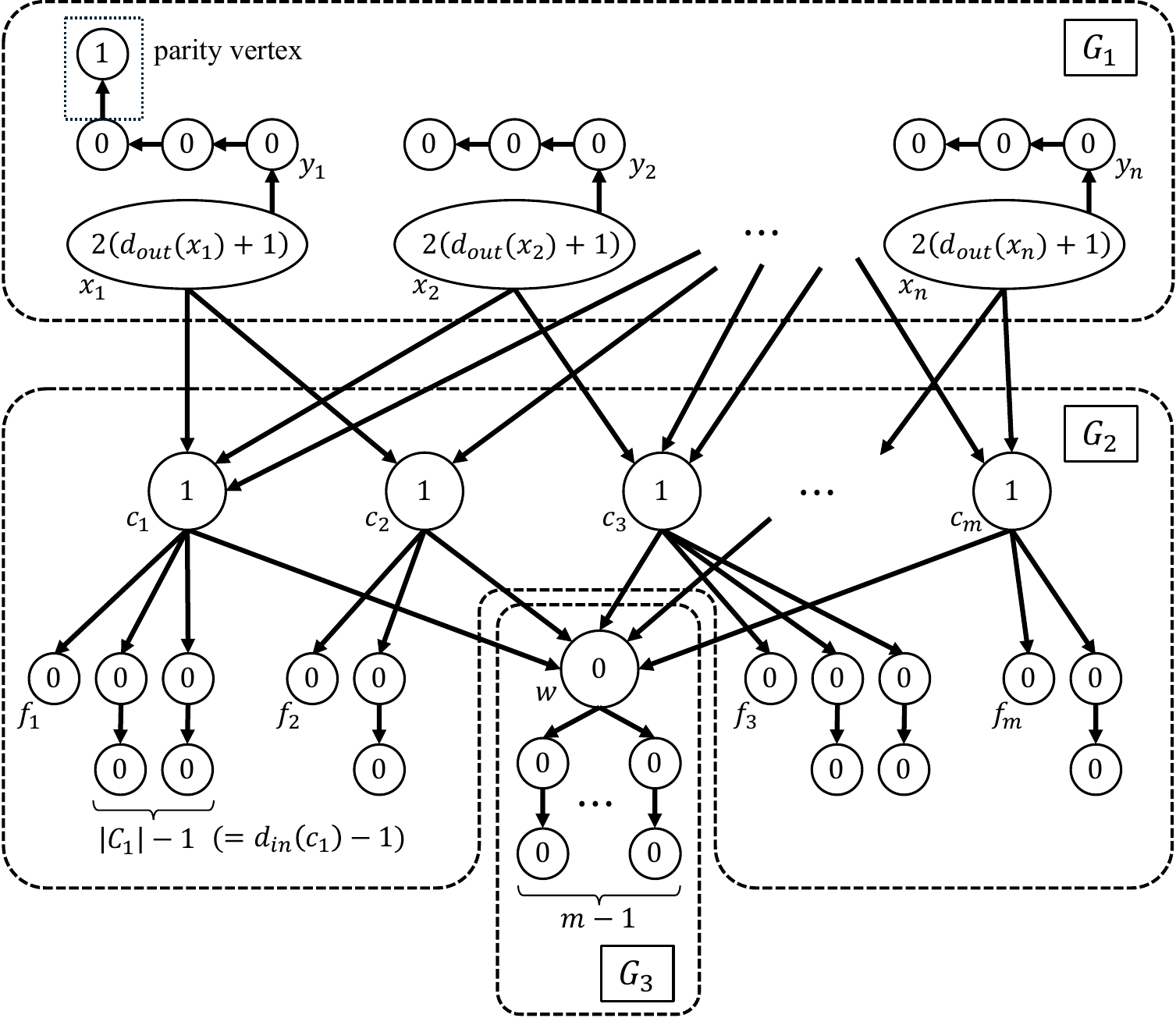}
  \caption{A reduction graph $G$.}
  \label{fig:pspace-c}
 \end{figure} 

\begin{claim}\label{claim_even}
In every gameplay, both the total number of moves made within $G_1$ and the total number of moves made within $G_2$ are even.
\end{claim}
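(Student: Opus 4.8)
The plan is to analyse an arbitrary complete play of \dyn{} on $G$, i.e.\ a move sequence from the initial position to a terminal one, by pinning down how many times each vertex of $G_1$ and of $G_2$ can be chosen and then adding up the parities. Two facts about how tokens flow keep the bookkeeping local. First, $G$ has no edge entering $G_1$, every edge leaving $G_1$ goes to $G_2$ (the edges $(x_i,c_j)$), every edge entering $G_2$ comes from $G_1$, and the only edges leaving $G_2$ go to $G_3$ (the edges $(c_j,w)$); hence the token count of a vertex of $G_1$ is changed only by moves choosing vertices of $G_1$ and never increases on any $x_i$, while the token count of a vertex of $G_2$ increases only when some $x_i$ or some vertex of $G_2$ is chosen. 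Second, a terminal position is recognised vertex by vertex, so any vertex that is playable at the start must be ``drained'' before the play can end.

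For $G_1$: the vertex $x_i$ holds $2(d_{\mathrm{out}}(x_i)+1)$ tokens and never gains any, so it is chosen at most twice; it is playable initially but unplayable at the end, so it is chosen at least once; hence $x_i$ is chosen once or twice. The first vertex $y_i$ of $Q_i$ gains one token per choice of $x_i$, hence at most two in all; since $d_{\mathrm{out}}(y_i)=1$ it is chosen at most once, it can become playable only after $x_i$ has been chosen twice, and in that case it ends with two tokens unless it is drained --- so in a complete play $y_i$ is chosen exactly when $x_i$ is chosen twice. Every vertex of $Q_i$ after $y_i$ has out-degree $1$ but ever receives at most one token (its in-neighbour in $G_1$ is chosen at most once), so none of them is ever chosen; consequently, when $n$ is odd the parity vertex gains nothing along $Q_1$, keeps its single initial token, and as a sink is chosen exactly once. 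Therefore, for each $i$, the number of moves choosing $x_i$ or a vertex of $Q_i$ equals $(\text{choices of }x_i)+(\text{choices of }y_i)\in\{1,3\}$, which is odd; summing over $i$ and adding the one move on the parity vertex precisely when $n$ is odd, the total number of moves within $G_1$ is $\equiv n+[n\text{ odd}]\equiv 0\pmod 2$.

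For $G_2$ the argument runs parallel. Here $c_j$ starts with one token and ever gains at most $2|C_j|$ more (one per choice of each variable vertex $x_i$ with $X_i\in C_j$), while $d_{\mathrm{out}}(c_j)=|C_j|+1$; since $2(|C_j|+2)>1+2|C_j|$, it is chosen at most once. If it is chosen, its successor sink $f_j$ then holds one token and must be drained, i.e.\ chosen once; if $c_j$ is never chosen, $f_j$ stays empty. Each vertex on a length-$2$ path hanging off $c_j$ has out-degree $1$ but ever receives at most one token, so it is never chosen. Hence the number of moves within $G_2$ that choose $c_j$, $f_j$, or a vertex of one of these paths equals $(\text{choices of }c_j)+(\text{choices of }f_j)\in\{0,2\}$; summing over $j$, the total number of moves within $G_2$ is even.

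The step I expect to be the main obstacle is justifying the ``never chosen'' assertions rigorously: one must verify that the downstream vertices of the paths $Q_i$ and the path vertices hanging off each $c_j$ truly cannot accumulate two tokens. This is where the one-directional token flow combines with the ``chosen at most once/twice'' bounds, propagated step by step along the paths, and where completeness of the play is used so that $x_i$, the parity vertex, each $f_j$, and every $c_j$ that ever becomes playable are all forced to be emptied before the game ends.
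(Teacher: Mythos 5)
Your argument is essentially the paper's: you bound how many times each vertex of $G_1$ and $G_2$ can be chosen, pair each second play of $x_i$ with the forced play of $y_i$ and each play of $c_j$ with the forced play of $f_j$, and sum parities; the paper's proof performs exactly the same count, only more tersely, and you add the useful explicit observation that completeness of the play forces every still-playable vertex to be drained. One assertion in your write-up is literally false and, taken at face value, leaves a hole: the last vertex of $Q_i$ (for $i\neq 1$, or for all $i$ when $n$ is even) and the last vertex of each length-$2$ path hanging off $c_j$ or $w$ are sinks of out-degree $0$, not $1$, so ``receives at most one token'' does not by itself rule out their being chosen --- a sink holding one token is playable and would have to be drained, which would change the parity. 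The repair is exactly the step-by-step propagation you mention at the end: the intermediate path vertex has out-degree $1$ and receives at most one token, hence is never chosen, hence the sink endpoint receives zero tokens and is never chosen. With that correction the proof is complete and coincides with the paper's.
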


\begin{proof}
%
In $G_1$,  
every $x_i$ can be played at least once.  
If $d_\mathrm{out}(x_i) + 1$ tokens are removed from $x_i$, $x_i$ can be played once more.  
Let $t_x$ denote the number of vertices $x_i (1 \leq i \leq n)$ that are played twice.  
When $x_i$ is played twice, $y_i$ holds two tokens, so $y_i$ can be played once.  
After playing $y_i$, no vertex on $Q_i$ can be played.  
Therefore, if $n$ is even, the total number of moves is $n + 2t_x$, which is even.  
If $n$ is odd, considering one move on the parity vertex, the total number of moves in $G_1$ is $n + 1 + 2t_x$, which is also even.


For $G_2$, let $t_c$ denote the number of vertices $c_j (1 \leq j \leq m)$ that are played during the game.  
Since 
each $x_i$ can send at most two tokens, every $c_j$ can be played at most once. 
When $c_j$ is played, one token is sent to $f_j$, and thus $f_j$ can be played once.  
Therefore, the total number of moves made within $G_2$ is $2t_c$.
\end{proof}

Let $P_\mathcal{N}$ and $P_\mathcal{P}$ denote the first and second players to move in the initial position of \dyn, respectively.
By Claim \ref{claim_even}, the number of moves made in $G_3$ determines the parity of total moves in a gameplay and the winner. 
Since $d_\mathrm{out}(w) = m - 1$, $P_\mathcal{N}$ can move at $w$ if tokens are sent from all of $c_1, \ldots, c_m$.


Suppose that $P_1$ has a winning strategy in \gpos\ on $F$.
Then, $P_\mathcal{N}$ selects a vertex $x_i$ based on the winning strategy of $P_1$ and removes $d_\mathrm{out}(x_i) + 1$ tokens from $x_i$ in  $G$.
Since $x_i$ still has $d_\mathrm{out}(x_i) + 1$ tokens remaining, $x_i$ is played again.
As a result, each $c_j$ connected to $x_i$ ends up with at least $|C_j| + 2$ tokens in total.
Since $d_\mathrm{out}(c_j) = |C_j| + 1$, the vertex $c_j$ becomes playable.
Using the winning strategy on $F$, $P_\mathcal{N}$ can make all $c_j$ playable, and thus $w$ will accumulate $m$ tokens.
Therefore, by playing on $w$, the total number of moves becomes odd, and $P_\mathcal{N}$ wins in \dyn.


Suppose that $P_0$ has a winning strategy in \gpos\ on $F$.
Then, $P_\mathcal{P}$ selects a vertex $x_i$ based on the winning strategy of $P_0$ and removes at least $d_\mathrm{out}(x_i) + 2$ tokens from $x_i$ in $G$.
Since $x_i$ now has at most $d_\mathrm{out}(x_i)$ tokens, it can no longer be played.
Using the winning strategy in $F$, $P_\mathcal{P}$ ensures that some vertex $c_j$ ends up with exactly $|C_j| + 1$ tokens.
Since $d_\mathrm{out}(c_j) = |C_j| + 1$, the vertex $c_j$ remains unplayable.
Because $d_\mathrm{out}(w) = m - 1$, if even one $c_j$ fails to send a token, then $w$ cannot be played.
By Claim \ref{claim_even}, since the total number of moves in $G$ is even, $P_\mathcal{P}$ wins in \dyn.

Finally, we verify that $G$ is a bipartite and directed acyclic graph.  
We consider a partition of vertices of $G$ into sets $A$ and $B$. 
The following vertices are assigned to $A$:
$x_i$ and the second vertex in $Q_i$ for $1 \leq i \leq n$, the parity vertex, $f_j$ and the first vertex of all paths of length 2 which extend from $c_j$ for $1 \leq j \leq m$, the last vertex of all paths of length 2 which extend from $w$, and $w$.
All other vertices are in $B$.
Since this partition ensures that there are no edges within both $A$ and $B$, $G$ is a bipartite graph.

Moreover, $G_1$, $G_2$, and $G_3$ contain no cycles, and vertices in $G_1$ have edges only to $G_2$, while vertices in $G_2$ have edges only to $G_3$.  
Since vertices in $G_3$ do not have edges to any vertex in $G_1$ or $G_2$, the graph $G$ contains no cycles.

We conclude the proof by presenting a concrete example in Figure~\ref{fig:example}.

 \begin{figure}[htbp]
  \centering
\includegraphics[width=0.8\linewidth]{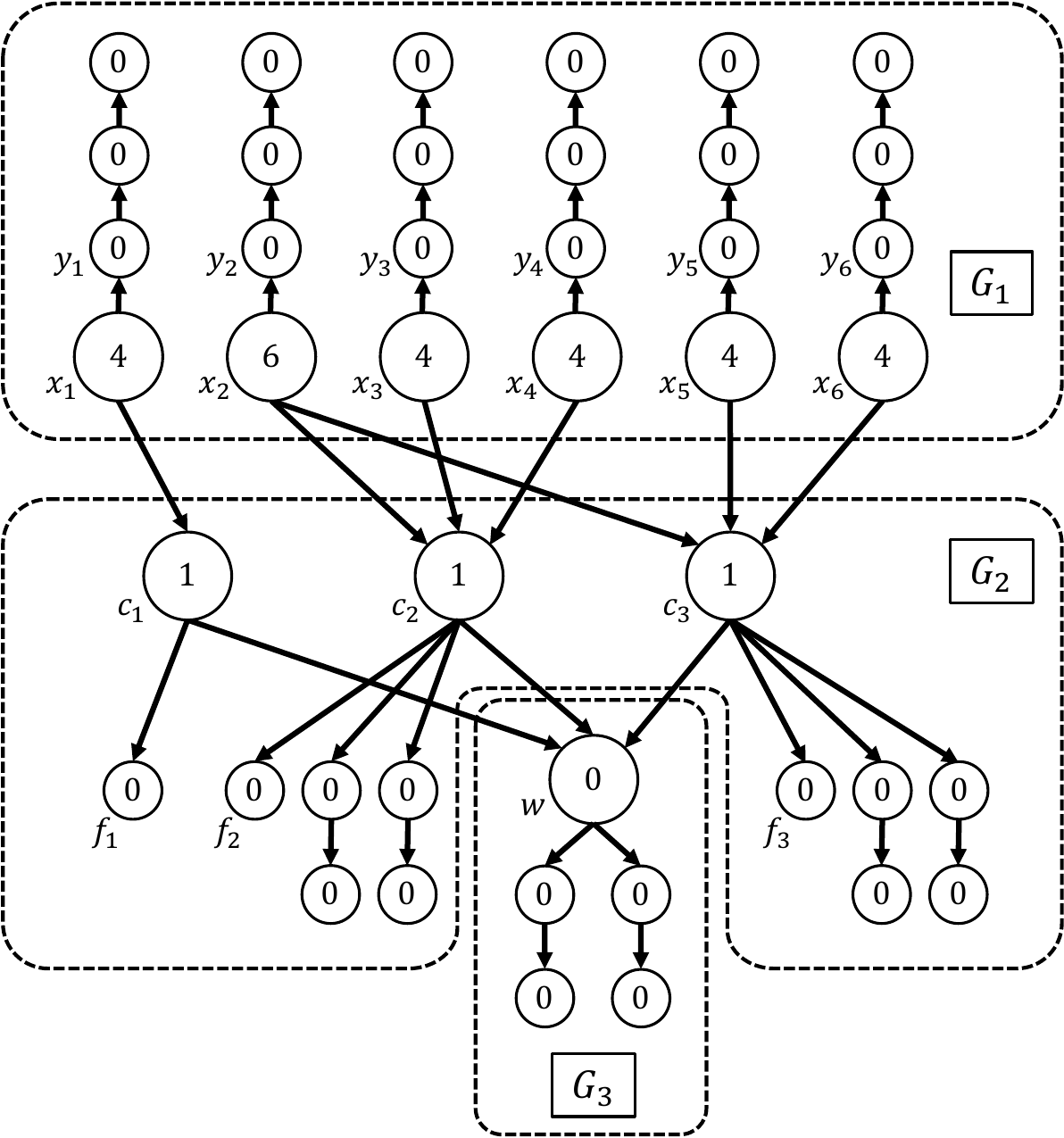}
  \caption{The reduction graph corresponding to $F=x_1 \land(x_2 \lor x_3 \lor x_4) \land (x_2 \lor x_5 \lor x_6)$.}
  \label{fig:example}
 \end{figure} 
 
%
\end{proof}

\subsection{Solvable cases and simplification methods on \dyn}
\label{subsec:SSDYN}
In the previous subsection, we showed the PSPACE-completeness of \dyn. Despite this, there are some cases that can be solved easily, or simplified to a smaller position.\\

\begin{definition}
Let $G_4=(V,E)$ be a digraph satisfying the following conditions:
    \begin{itemize}
        \item The vertex set $V$ can be partitioned into $V=V_1\cup W_1$ such that $V_1\cap W_1=\varnothing$.
        \item For any $v\in V_1$ and any $w\in W_1$, there exists a unique directed edge $(v,w)\in E$.
        \item For any $w\in W_1$, $d_\mathrm{out}(w)=0$. 
    \end{itemize}
    \begin{figure}[tb]
        \centering
        \includegraphics[width=1\linewidth]{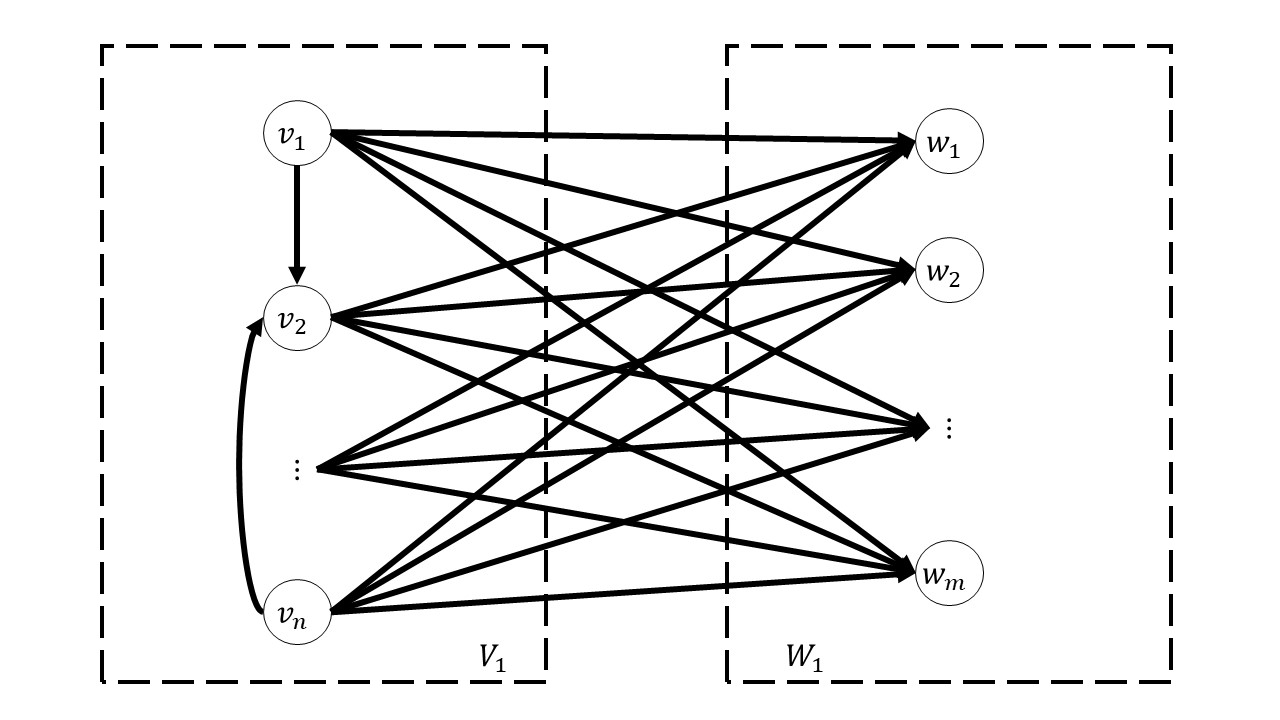}
        \caption{Graph $G_4$}
        \label{fig:simplify}
    \end{figure}
    Figure \ref{fig:simplify} shows the construction of graph $G_4$.
    Let $V_1=\{v_1,\dots, v_n\}$ and $W_1=\{w_1,\dots, w_m\}$ and a position in the game is represented as $(x_1,\dots, x_n, y_1, \dots, y_m)$, where $x_i$ is the number of tokens on vertex $v_i$ and $y_j$ is that on vertex $w_j$.\\
\end{definition}

We separate into two cases depending on the parity of $\lvert W_1\rvert$.\\
\begin{theorem}
\label{Thm:DYNodd}
    
If $\lvert W_1\rvert$ is odd, for \dyn~played on $G_4$, the SG-value $g(x_1,\dots, x_n, y_1, \dots, y_m)$
is 
$$g(x_1,\dots, x_n, y_1, \dots, y_m)=\bigoplus_{j=1}^{m}y_j.$$
        In particular, the position $(x_1,\dots, x_n, y_1, \dots, y_m)$ is a \Pp~if and only if $\bigoplus_{j=1}^{m}y_j=0$.
        
\end{theorem}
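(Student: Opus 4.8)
The plan is to apply Proposition \ref{Grundyprop1} with the decomposition $M=\coprod_{i\in\Zz}M_i$, where
$$M_i=\left\{(x_1,\dots,x_n,y_1,\dots,y_m)\ \middle|\ \bigoplus_{j=1}^{m}y_j=i\right\}.$$
Each $M_i$ is nonempty (place $i$ tokens on $w_1$ and none elsewhere), and the coordinates $x_1,\dots,x_n$ never affect which part a position lies in. Granting this decomposition, it suffices to verify the two hypotheses of Proposition \ref{Grundyprop1}: (\RMi) no option of a position in $M_i$ stays in $M_i$; and (\RMii) from any position of $M_k$ with $i<k$ there is a move into $M_i$. The conclusion $g(x_1,\dots,x_n,y_1,\dots,y_m)=\bigoplus_{j}y_j$ then follows immediately, and the description of $\mathcal{P}$-positions comes from Proposition \ref{Sprague-Grundy} (or the $i=0$ case directly).

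For (\RMii) I would argue exactly as for ordinary {\sc Nim}, using only moves at the $w_j$'s. Given $(x_1,\dots,x_n,y_1,\dots,y_m)$ with $\bigoplus_j y_j=k>i$, set $s=i\oplus k\neq 0$ and let $b$ be the position of its leading bit; since $i<k$, the $b$-th bit of $k$ equals $1$, so some $y_{j_0}$ has $b$-th bit $1$. Replacing $y_{j_0}$ by $y_{j_0}\oplus s$ strictly decreases it (bits above $b$ are unchanged, the $b$-th drops from $1$ to $0$), and since $d_\mathrm{out}(w_{j_0})=0$ this is the legal move at $w_{j_0}$ removing $y_{j_0}-(y_{j_0}\oplus s)\ge 1$ tokens; the resulting position has nim-sum $k\oplus y_{j_0}\oplus(y_{j_0}\oplus s)=i$, i.e.\ it lies in $M_i$.

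For (\RMi) there are two kinds of options. A move at some $w_j$ replaces $y_j$ by a strictly smaller value, hence changes $\bigoplus_j y_j$, so it leaves $M_i$. A move at some $v_i$ is possible only when $x_i\ge d_\mathrm{out}(v_i)+1$; because every edge $(v_i,w_j)$ is present, it increments every $y_j$ by one (while touching only the $x$-coordinates otherwise). Now the least significant bit of any nim-sum is the parity of the ordinary sum of its terms, so passing from $\bigoplus_j y_j$ to $\bigoplus_j (y_j+1)$ changes this bit by $m\bmod 2$; since $m=\lvert W_1\rvert$ is odd the bit flips, the nim-sum changes, and the option leaves $M_i$. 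This last point — that a $v_i$-move cannot preserve the nim-sum on $W_1$ — is the only step I expect to require real care, and it is precisely where the odd-cardinality hypothesis is used; everything else is the standard {\sc Nim} computation together with bookkeeping that the $x$-coordinates are inert.
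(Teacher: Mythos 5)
Your proposal is correct and takes essentially the same route as the paper: the same decomposition of positions by the nim-sum $\bigoplus_j y_j$, the same standard {\sc Nim} argument (moving at a $w_j$) to reach any lower class, and the same parity observation --- that since $\lvert W_1\rvert$ is odd, incrementing every $y_j$ flips the least significant bit of the nim-sum --- to show a $V_1$-move always leaves the current class. No gaps.
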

\begin{proof}
We separate the set of positions depending on the XOR of the numbers of piles on each vertex in $W_1$ 
as follows:
$$S_i = \left\{(x_1,\ldots, x_n, y_1,\ldots,y_m)\middle| \bigoplus_{j=1}^{m}y_j = i\right\}.$$
We need to show that
    \begin{enumerate}
        \item[\text{(\RMi)} ] For any $(\textbf{x},\textbf{y}) \in S_i$, its option $(\textbf{x}',\textbf{y}')$ is not in $S_i$;
        \item[\text{(\RMii)} ] If $i<k$, for any $(\textbf{x},\textbf{y}) \in S_k$, there is an option $(\textbf{x}',\textbf{y}') \in S_i$.
  \end{enumerate}
    For (\RMi), we take $(\textbf{x}, \textbf{y})=(x_1,\dots, x_n,y_1,\dots, y_m)\in S_i$ and assume we move to $(\textbf{x}',\textbf{y}')=(x_1',\dots,x_n',y_1',\dots, y_m')$. 

\begin{itemize}
\item When the move is made on a vertex in $V_1$,
since $\bigoplus_{j=1}^{m} y_j = i$, the parity of the number of $y_j$ (for $j\in\{1,\dots,m\}$) with odd is the same as the parity of $i$ and the parity of the number of $y_j$ (for $j\in\{1,\dots,m\}$) with even is not the same as the parity of $i$.
Consequently, the parity of the number of $y_j+1$ (for $j\in\{1,\dots,m\}$) with odd is not the same as the parity of $i$. Thus, $\bigoplus_{j=1}^{m} (y_{j}')=\bigoplus_{j=1}^{m} (y_{j}+1) \neq i$, so $(\textbf{x}',\textbf{y}')\not\in S_i$.
\item When the move is made on a vertex in $W_1$, by the properties of nim-sum, $\bigoplus_{j=1}^{m} (y_{j}') \neq i$, so $(\textbf{x}',\textbf{y}')\not \in S_i$.
\end{itemize}

For (\RMii), we take $(\textbf{x}, \textbf{y})=(x_1,\dots, x_n,y_1,\dots, y_m)\in S_k$ with $i < k$. Then, from the definition of $S_k$, $\bigoplus_{j=1}^{m} y_j = k$. By the properties 
of nim-sum, there exists some $y_j^{'} < y_j$ such that $y_{1}\+ \dots \+y_{j-1}\+ y_j^{'}\+y_{j+1}\+\dots \+ y_{m}=i$. This move transforms the position into an element of $S_i$.

\end{proof}


\begin{theorem}\label{Yama main}
\label{Thm:DYNeven}
Let $\lvert W_1\rvert$ be even.
In order to analyze this case,
we consider another ruleset on the same digraph:
 A player chooses $v_i\in V_1$ and takes at least $d_\mathrm{out}(v_i)+1$ tokens from $v_i$, adds one token to each $v_j\in V_1$ where $(v_i, v_j)\in E$, 
and add no token to $w_k\in W_1$ where $(v_i,w_k)\in E$. Note that for any $v_i\in V_1$, $d_\mathrm{out}(v_i)=|\{x\in (V_1\cup W_1)\mid (v_i, x)\in E\}|$. 
That is, tokens are not added to and taken from vertices in $W_1$, but when tokens are taken from a vertex $v_i \in V_1$, vertices in $W_1$ are taken into account for counting $d_{\rm out}(v_i)$.
We write $S_{\PP}$ for the set of $\PP$-positions of this game and $S_{\NN}$ for the set of $\NN$-positions of this game.


Then, for original \dyn~played on $G_4$, the set of \Pps~is $S=S_1\cup S_2$ where
    \begin{eqnarray*}
        S_1 &=& \left\{(x_1,\dots, x_n,y_1,\dots, y_m)\middle| (x_1,\dots, x_n)\in S_{\PP} \text{ and } \bigoplus_{j=1}^{m} y_j = 0 \right\},\\
        S_2 &=& \left\{(x_1, \dots, x_n,y_1,\dots, y_m)\middle| (x_1,\dots, x_n)\in S_{\NN} \text{ and } \bigoplus_ {j=1}^{m} y_j = 1 \right\}.
    \end{eqnarray*}
\end{theorem}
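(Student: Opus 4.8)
The plan is to identify $S=S_1\cup S_2$ with the set of \Pps~by verifying, via Proposition \ref{normalprop:PpNp}, the two standard closure conditions: (a) every option of a position in $S$ lies outside $S$, and (b) every position outside $S$ has an option inside $S$ (together these force terminal positions into $S$ and pin down the \Pps). The link to the auxiliary game is the following dictionary: a move on a vertex $w_k\in W_1$ is an ordinary Nim move on the pile $y_k$ and leaves $(x_1,\dots,x_n)$ unchanged, whereas a move on a vertex $v_i\in V_1$ removes at least $d_\mathrm{out}(v_i)+1$ tokens from $v_i$, adds $1$ to every $y_j$ (since $(v_i,w_j)\in E$ for all $j$), and acts on $(x_1,\dots,x_n)$ exactly as the corresponding auxiliary move does; conversely, every auxiliary move arises this way. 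As the auxiliary game is again short, each $(x_1,\dots,x_n)$ lies in exactly one of $S_\PP,S_\NN$, so $S_1$ and $S_2$ are disjoint.

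The combinatorial core is the behaviour of $\bigoplus_{j=1}^{m}y_j$ under the forced simultaneous increment $y_j\mapsto y_j+1$, when $m=\lvert W_1\rvert$ is even. I would record as a lemma that, for even $m$, writing $N=\bigoplus_{j=1}^m y_j$ and $N'=\bigoplus_{j=1}^m (y_j+1)$: (i) $N$ and $N'$ have the same parity; (ii) if $N$ is odd then $N'\neq N$; (iii) if every $y_j$ is even and $N=0$ then $N'=0$. Item (i) holds because incrementing interchanges odd and even entries and $m$ is even, so the number of odd entries changes by an even amount. For (ii) and (iii) one argues bitwise: $\floor{N/2}=\bigoplus_j\floor{y_j/2}$, while $\floor{(y_j+1)/2}$ equals $\floor{y_j/2}$ for even $y_j$ and $\floor{y_j/2}+1$ for odd $y_j$, so $\floor{N'/2}$ is obtained from $\floor{N/2}$ by incrementing those $\floor{y_j/2}$ with $y_j$ odd; when $N$ is odd there is an odd number of these, so the parity reasoning of (i) applied one bit higher gives $\floor{N'/2}\neq\floor{N/2}$, hence $N'\neq N$, while when all $y_j$ are even with $N=0$ there are none and $N'$ has parity $0$, so $N'=0$.

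For condition (a): from a position in $S_1$ (where $(x_1,\dots,x_n)\in S_\PP$ and $N=0$), a $W_1$-move strictly decreases a pile, so $N'\neq0$, while fixing the $x$-part in $S_\PP$, and a $V_1$-move sends the $x$-part to an auxiliary option, necessarily in $S_\NN$, while keeping $N$ even by (i), so $N'\neq1$; in both cases the result lies in neither $S_1$ nor $S_2$. From a position in $S_2$ (where $(x_1,\dots,x_n)\in S_\NN$ and $N=1$), a $W_1$-move yields $N'\neq1$ with the $x$-part still in $S_\NN$, and a $V_1$-move keeps $N$ odd by (i), so $N'\neq0$, and by (ii) also $N'\neq1$; again one leaves $S$. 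For condition (b), let the position lie outside $S$. If $(x_1,\dots,x_n)\in S_\PP$ then $N\neq0$, and the Nim move on the $y_j$'s that zeroes the nim-sum lands in $S_1$. If $(x_1,\dots,x_n)\in S_\NN$ and $N\geq2$, one $W_1$-move reducing a suitable pile to $y_j\oplus N\oplus1$ --- legal because $N\oplus1$ has the same top bit as $N$ when $N\geq2$ --- makes the nim-sum $1$ and lands in $S_2$. If $(x_1,\dots,x_n)\in S_\NN$ and $N=0$, then either some $y_j$ is odd, and the $W_1$-move $y_j\mapsto y_j-1$ makes the nim-sum $1$ (landing in $S_2$), or all $y_j$ are even, and a $V_1$-move taking the $x$-part into $S_\PP$ (which exists since the $x$-part is in $S_\NN$) keeps $N=0$ by (iii) and lands in $S_1$.

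I expect the crux to be precisely this coupling, in the even case, between the auxiliary $V_1$-game and the obligatory increment of all $W_1$-piles. Two things must go right: the increment must not accidentally repair a ``bad'' nim-sum when we are trying to leave $S$ --- which is where part (ii), the statement that simultaneously incrementing an even number of piles with odd nim-sum always changes it, is indispensable --- and, conversely, in the one awkward region where $(x_1,\dots,x_n)\in S_\NN$ and $N=0$, a legal move into $S$ must always be available, which is what forces the case split on the parities of the $y_j$ and the use of part (iii). Everything else is routine bookkeeping with the $\PP/\NN$ characterisation and standard Nim moves.
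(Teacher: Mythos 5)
Your proposal is correct and follows essentially the same route as the paper: the same decomposition $S=S_1\cup S_2$, the same verification of the two closure conditions, the same case split in (b) on whether some $y_j$ is odd, and the same key fact that a forced increment of an even number of piles preserves the parity of the nim-sum and, when the nim-sum is $1$, pushes it to something $\geq 3$ (your lemma parts (i)--(iii) just package the paper's inline two-bit computation more cleanly). No gaps.
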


\begin{remark}
    We get the same $S_{\PP}$ and $S_{\NN}$ when we consider the following \dyn, which has another graph
    : We replace all elements of $W_1$ with a vertex $w_0$
    . Additionally, we add new vertices, more than the sum of tokens placed on $G$, and add edges extend from $w_0$ to these newly added vertices. Then, $S_{\PP}$ is the set of $\PP$-positions and $S_{\NN}$ is the set of $\NN$-positions of this \dyn. Since it is not possible to move the tokens 
    at vertex $w_0$, this ruleset becomes equivalent to that described above.
\end{remark}
\begin{proof}
  We need to show that
 
    \begin{enumerate}
        \item[\text{(\RMi)} ] For any $(\textbf{x},\textbf{y}) \in S$, its option $(\textbf{x}',\textbf{y}')$ is not in $S$.
        \item[\text{(\RMii)} ] For any $(\textbf{x},\textbf{y}) \notin S$, there is an option $(\textbf{x}',\textbf{y}') \in S$.
  \end{enumerate}
    For (\RMi), we take $(\textbf{x}, \textbf{y})=(x_1,\dots, x_n,y_1,\dots, y_m)\in S$.
    \begin{itemize}
        \item  If $(\textbf{x},\textbf{y})\in S_1$, assume we move to $(\textbf{x}',\textbf{y}')=(x_1',\dots,x_n',y_1',\dots, y_m')$.
        \begin{itemize}
            \item When the move is made on a vertex in $V_1$, by the definition of $S_{\PP}$, $\textbf{x}'\in S_{\NN}$. Therefore, $(\textbf{x}',\textbf{y}')\not\in S_1$. Also, since $\bigoplus_{j=1}^{m} y_j = 0$, the number of $y_j$ (for $j\in\{1,\dots,m\}$) with even is even. Consequently, the number of $y_j+1$ (for $j\in\{1,\dots,m\}$) with odd is even.
            Thus, $\bigoplus_{j=1}^{m} (y_{j}')=\bigoplus_{j=1}^{m} (y_{j}+1)$ is even, so $(\textbf{x}',\textbf{y}')\not\in S_2$. Hence, $(\textbf{x}',\textbf{y}')\not \in S$.
            \item When the move is made on a vertex in $W_1$, by the properties of nim-sum, $(\textbf{x}',\textbf{y}')\not \in S_1$. Since $\textbf{x}'=\textbf{x}$, we have $\textbf{x}'\in S_{\PP}$, so $(\textbf{x}',\textbf{y}')\not\in S_2$. Therefore, $(\textbf{x}',\textbf{y}')\not\in S$.
        \end{itemize}
        \item If $(\textbf{x},\textbf{y})\in S_2$, assume we move to $(\textbf{x}',\textbf{y}')=(x_1',\dots,x_n',y_1',\dots, y_m')$.
        \begin{itemize}
            \item When the move is made on a vertex in $V_1$, we show that $\bigoplus_{j=1}^{m} (y_{j}+1) \ge 3$. Since $\bigoplus_{j=1}^{m} y_{j}=1$, the number of $y_j$ (for $j\in\{1,\dots,m\}$) with odd is odd and the number of $y_j$ with even is odd. For such odd $y_j$, the second least significant bit of $y_j$ and $y_j+1$ in binary representation are different. That is, 
            \[\bigoplus_{\substack{1\le i\le m\\ y_i:\text{odd}}} y_{j} = \dots 01_{(2)} \text{ and } \bigoplus_{\substack{1\le i\le m\\ y_i:\text{odd}}} (y_{j}+1) = \dots 10_{(2)}\] or \[\bigoplus_{\substack{1\le i\le m\\ y_i:\text{odd}}} y_{j} = \dots 11_{(2)} \text{ and } \bigoplus_{\substack{1\le i\le m\\ y_i:\text{odd}}} (y_{j}+1) = \dots 00_{(2)}\] hold.
            Note that for the former case, since $\bigoplus_{j=1}^{m} y_{j}=1$, \[\bigoplus_{\substack{1\le i\le m\\ y_i:\text{even}}} y_{j} = \dots 00_{(2)}\] holds and for the later case, \[\bigoplus_{\substack{1\le i\le m\\ y_i:\text{even}}} y_{j} = \dots 10_{(2)}\] holds. In addition, 
            for even $y_j$, the second least significant bit of $y_j$ and $y_j+1$ in binary representation are the same. Therefore, for the former case, \[\bigoplus_{\substack{1\le i\le m\\ y_i:\text{even}}} (y_{j} +1)= \dots 01_{(2)}\] holds and for the later case, \[\bigoplus_{\substack{1\le i\le m\\ y_i:\text{even}}} (y_{j}+1) = \dots 11_{(2)}\] holds.
             Thus, \[\bigoplus_{j=1}^{m} (y_{j}+1)=\bigoplus_{\substack{1\le i\le m\\ y_i:\text{odd}}} (y_{j}+1)\+ \bigoplus_{\substack{1\le i\le m\\ y_i:\text{even}}} (y_{j}+1)=\dots 10_{(2)}\+ \dots 01_{(2)}=\dots 11_{(2)}\] or \[\bigoplus_{j=1}^{m} (y_{j}+1)=\bigoplus_{\substack{1\le i\le m\\ y_i:\text{odd}}} (y_{j}+1)\+ \bigoplus_{\substack{1\le i\le m\\ y_i:\text{even}}} (y_{j}+1)=\dots 00_{(2)}\+ \dots 11_{(2)}=\dots 11_{(2)}\]  holds. In any case, the last two bits of $\bigoplus_{j=1}^{m} (y_{j}+1)$ in binary representation are $11$, which implies $\bigoplus_{j=1}^{m} (y_{j}+1) \ge 3$. Therefore, $(\textbf{x}',\textbf{y}')\not\in S$.
            \item When the move is made on a vertex in $W_1$, we have $\textbf{x}'\in S_{\NN}$ since $\textbf{x}'=\textbf{x}$. Thus $(\textbf{x}',\textbf{y}')\not\in S_1$. By the properties of nim-sum, $(\textbf{x}',\textbf{y}')\not \in S_2$. Therefore, $(\textbf{x}',\textbf{y}')\not\in S$.
        \end{itemize}
    \end{itemize}

~

    For (\RMii), we take $(\textbf{x}, \textbf{y})=(x_1,\dots, x_n,y_1,\dots, y_m)\not\in S$.
    \begin{itemize}
        \item For the case $(x_1,\dots, x_n)\in S_{\PP}$, $\bigoplus_{j=1}^{m} y_j \neq 0$. By the properties of nim-sum, 
        there exists some $y_j^{'} < y_j$ such that $y_{1}\+ \dots \+y_{j-1}\+ y_j'\+y_{j+1}\+\dots \+ y_{m}=0$. This move transforms the position into an element of $S_1$.

        \item For the case $(x_1,\dots, x_n)\in S_{\NN}$, $\bigoplus_{j=1}^{m} y_j \neq 1$.
        \begin{itemize}
            \item If $\bigoplus_{j=1}^{m} y_j \ge 2$, by the properties of nim-sum, there exists some $y_j^{'} < y_j$ such that $y_{1}\+ \dots \+y_{j-1}\+ y_j^{'}\+y_{j+1}\+\dots \+ y_{m}=1$. This move transforms the position into an element of $S_2$.
            \item If $\bigoplus_{j=1}^{m} y_j =0$:
                \begin{itemize}
                    \item If all $y_j$ (for $j\in\{1,\dots,m\}$ ) are even, there exists a position $\textbf{x}'$ such that $\textbf{x}\to \textbf{x}'\in S_{\PP}$ because $\textbf{x}\in S_{\NN}$. Note that $\bigoplus_{j=1}^{m} (y_j+1)=\bigoplus_{j=1}^{m} (y_j\+ 1)=\bigoplus_{j=1}^{m} y_j=0$. Thus, the resulting position is an element of $S_1$.
                    \item If there exists an odd $y_j$ (for $j\in\{1,\dots,m\}$), let $y_l$ be an odd number. By removing one token from $w_l$, we can transform the position into an element of $S_2$.
                \end{itemize}
        \end{itemize}
    \end{itemize}
\end{proof}

From these theorems, despite that the general problem is PSPACE-complete, we can solve the winner determination problems for many cases of \dyn. Here, we show some examples as follows.

\begin{corollary}\label{dia2}
Let $G_5=(V,E)$ be a digraph with $V=\{v_1,v_2,w_1,w_2\}$ and $E=\{(v_1,v_2), (v_1,w_1), (v_1,w_2), (v_2,w_1), (v_2, w_2)\}$.
We consider \dyn~on $G_5$, in other words, we consider a ruleset $\Gamma=(M,f)$ such that
  \begin{eqnarray*}
  M &=& (\Zz)^{4},\\
  f(x,y,z_1,z_2)&=& \{(x-i,y+1,z_1+1,z_2+1) \mid 4 \leq i \leq x \} \\
           &&\cup \{(x,y-i,z_1+1,z_2+1) \mid 3 \leq i \leq y_1 \} \\
           &&\cup \{(x,y,z_1-i,z_2) \mid 1 \leq i \leq z_1 \} \\
           &&\cup \{(x,y,z_1,z_2-i) \mid 1 \leq i \leq z_2 \},
  \end{eqnarray*}
      where $x$ is the number of tokens on vertex $v_1$, $y$ is that on vertex $v_2$, and $z_i$ is that on vertex $w_i$, respectively.
    
    Then, the set of \Pps~of this ruleset is $S=S_1\cup S_2$ where 
    \begin{equation*}
        S_1 = \left\{(x, y, z_1, z_2)\in (\mathbb{Z}_{\ge 0})^4\middle| \left(2\left\lfloor\frac{x}{4}\right\rfloor\le y \le 2\left\lfloor\frac{x}{4}\right\rfloor + 2\right) \text{ and } (z_1 \+ z_2 = 0)\right\},
    \end{equation*}
    \begin{equation*}
        S_2 = \left\{(x, y, z_1, z_2)\in (\mathbb{Z}_{\ge 0})^4\middle| \left(y < 2\left\lfloor\frac{x}{4}\right\rfloor \text{ or } 2\left\lfloor\frac{x}{4}\right\rfloor + 2 < y\right) \text{ and } ( z_1 \+ z_2 = 1 )\right\}.
    \end{equation*}
\end{corollary}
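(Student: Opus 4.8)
The plan is to recognize $G_5$ as an instance of the graph $G_4$ and then invoke Theorem~\ref{Thm:DYNeven}. Put $V_1 = \{v_1, v_2\}$ and $W_1 = \{w_1, w_2\}$. One checks immediately that $G_5$ meets the three defining conditions of $G_4$: $V_1 \cap W_1 = \varnothing$; each of the four pairs in $V_1 \times W_1$ is joined by exactly one directed edge; and $d_\mathrm{out}(w_1) = d_\mathrm{out}(w_2) = 0$. Since $\lvert W_1 \rvert = 2$ is even, Theorem~\ref{Thm:DYNeven} reduces the whole corollary to computing the $\PP$- and $\NN$-positions $S_{\PP}, S_{\NN}$ of the auxiliary ruleset on $V_1$; substituting those into the formulas for $S_1$ and $S_2$ in Theorem~\ref{Thm:DYNeven} (whose conditions $z_1 \oplus z_2 = 0$ and $z_1 \oplus z_2 = 1$ already match the statement) then gives the corollary. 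Concretely I claim
$$S_{\PP} = \left\{(x,y) \ \middle|\ 2\left\lfloor \tfrac{x}{4} \right\rfloor \le y \le 2\left\lfloor \tfrac{x}{4} \right\rfloor + 2 \right\},$$
with $S_{\NN}$ its complement.

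In the auxiliary game, $d_\mathrm{out}(v_1) = 3$ and $d_\mathrm{out}(v_2) = 2$ (out-degrees counted over all of $V_1 \cup W_1$, as prescribed in Theorem~\ref{Thm:DYNeven}), tokens are added only inside $V_1$, and the only edge inside $V_1$ is $(v_1, v_2)$. Hence the moves available at a position $(x, y)$ are exactly $(x, y) \to (x', y+1)$ with $0 \le x' \le x - 4$ (playing $v_1$) and $(x, y) \to (x, y')$ with $0 \le y' \le y - 3$ (playing $v_2$). Writing $q = \lfloor x/4 \rfloor$ and letting $P$ denote the proposed set, I would verify the two standard conditions, as in the proof of Theorem~\ref{YamaPps}. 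For (\RMi), take $(x,y) \in P$: a move on $v_1$ sends $y$ to $y+1 \ge 2q + 1$ while decreasing $\lfloor x'/4 \rfloor$ to at most $q - 1$, so $y+1$ overshoots the new window top $2\lfloor x'/4\rfloor + 2 \le 2q$; a move on $v_2$ sends $y$ to $y' \le y - 3 \le 2q - 1$, below the window; in both cases the image leaves $P$. For (\RMii), take $(x,y) \notin P$: if $y > 2q + 2$ then $y \ge 2q + 3 \ge 3$, so playing $v_2$ down to $(x, 2q) \in P$ is a legal move; if instead $y < 2q$ (which forces $q \ge 1$, hence $x \ge 4$), I would play $v_1$ and choose $x'$ so that $q' := \lfloor x'/4 \rfloor$ satisfies $2q' \le y + 1 \le 2q' + 2$, that is $q' \in [\tfrac{y-1}{2}, \tfrac{y+1}{2}]$. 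This closed interval has length $1$, so it contains the integer $q' = \lceil \tfrac{y-1}{2} \rceil$; from $0 \le y \le 2q - 1$ one gets $0 \le q' \le q - 1$, so $x' = 4q'$ satisfies $x' \le 4(q-1) \le x - 4$ and the move to $(x', y+1) \in P$ is admissible.

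I expect the only delicate point to be this last subcase $y < 2q$ of (\RMii): one has to check at once that the forced value $q'$ stays inside the admissible range $\{0, \dots, q-1\}$ and that a concrete $x' \le x - 4$ realizes $\lfloor x'/4 \rfloor = q'$, and this is exactly where the particular constants — the thresholds $4$ and $3$, the multiplier $2$, and the window width $2$ — must be compatible. Everything else is a short parity-and-inequality verification, after which the corollary is immediate from Theorem~\ref{Thm:DYNeven}.
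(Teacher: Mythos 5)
Your proposal is correct and follows essentially the same route as the paper: identify $V_1=\{v_1,v_2\}$, $W_1=\{w_1,w_2\}$ with $\lvert W_1\rvert$ even, reduce via Theorem~\ref{Thm:DYNeven} to the auxiliary game on $V_1$ with moves $(x,y)\to(x',y+1)$, $x'\le x-4$, and $(x,y)\to(x,y')$, $y'\le y-3$, and verify that $\bigl\{(x,y)\mid 2\lfloor x/4\rfloor\le y\le 2\lfloor x/4\rfloor+2\bigr\}$ is its set of \Pps. Your choice $q'=\lceil(y-1)/2\rceil$ in the subcase $y<2\lfloor x/4\rfloor$ coincides with the paper's $p=\lfloor y/2\rfloor$, and the legality and membership checks match those in the paper's proof.
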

~

Figure \ref{fig:g5} shows the digraph $G_5$.
\begin{figure}
    \centering
    \includegraphics[width=1\linewidth]{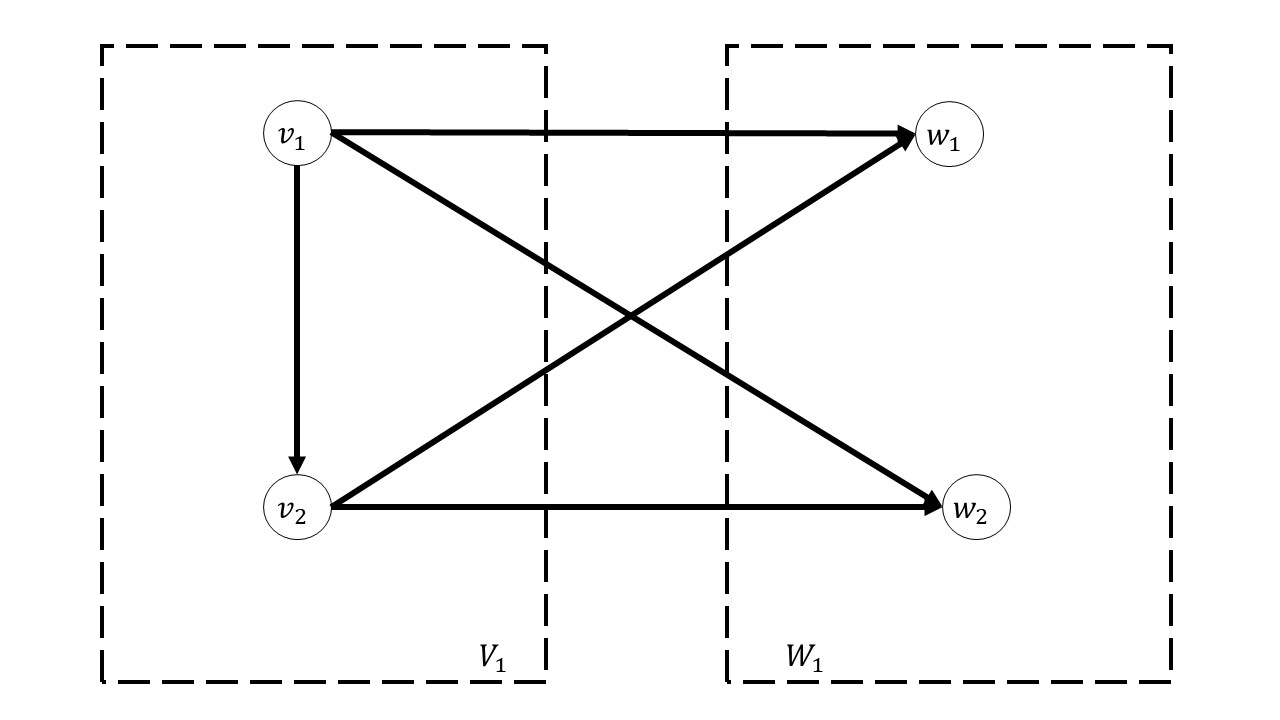}
    \caption{Digraph $G_5$}
    \label{fig:g5}
\end{figure}
\begin{proof}
    In order to apply Theorem \ref{Thm:DYNeven}, we consider a combinatorial game on $G_5$ allowing the following moves:
    \begin{itemize}
        \item Remove at least $4$ tokens from $v_1$ and add $1$ token to $v_2$.
        \item Remove at least $3$ tokens from $v_2$.
    \end{itemize}
    For this combinatorial game, if we show that $S_{\mathcal{P}}= \{2\left\lfloor{\frac{x}{4}}\right\rfloor\le y \le 2\left\lfloor{\frac{x}{4}}\right\rfloor + 2\}$ 
    coincides with the set of all $\PP$-positions, then the original theorem follows from Theorem \ref{Yama main}.
    
    We need to show that
 
    \begin{enumerate}
        \item[\text{(\RMi)} ] For any $(x,y) \in S_{\mathcal{P}}$, its option $(x',y')$ is not in $S_{\mathcal{P}}$.
        \item[\text{(\RMii)} ] For any $(x,y) \notin S_{\mathcal{P}}$, there is an option $(x',y') \in S_{\mathcal{P}}$.
    \end{enumerate}
    For (\RMi), we take $(x, y)\in S_{\mathcal{P}}$ and we write $(x',y')$ for its option.
    \begin{itemize}
        \item When a move is made on $x$, we note that; 
        \begin{equation*}
            \left\{ \,
            \begin{aligned}
                & x' = x-i\quad (i\in \{4,5,\dots, x\}) \\
                & y' = y + 1.
            \end{aligned}
            \right.
        \end{equation*}
        Since
        \begin{equation*}
            2\left\lfloor{\frac{x'}{4}}\right\rfloor = 2\left\lfloor{\frac{x - i}{4}}\right\rfloor \le 2\left\lfloor{\frac{x- 4}{4}}\right\rfloor =2\left\lfloor{\frac{x}{4}}\right\rfloor - 2,
        \end{equation*}
        we have $2\left\lfloor{\frac{x'}{4}}\right\rfloor + 2 \le 2\left\lfloor{\frac{x}{4}}\right\rfloor$.
        Also we know that $2\left\lfloor{\frac{x}{4}}\right\rfloor \le y < y'$, so $2\left\lfloor{\frac{x'}{4}}\right\rfloor + 2 < y'$, which implies $(x',y')\not \in S_{\mathcal{P}}$.
        \item When a move is made on $y$, we note that:
        \begin{equation*}
            \left\{ \,
            \begin{aligned}
                & x' = x \\
                & y' = y-i \quad (i\in \{3,4,\dots, y\}) 
            \end{aligned}
            \right.
        \end{equation*}
        Since
        \begin{equation*}
            y' = y - i \le 2\left\lfloor{\frac{x}{4}}\right\rfloor + 2-3 = 2\left\lfloor{\frac{x}{4}}\right\rfloor -1 < 2\left\lfloor{\frac{x'}{4}}\right\rfloor,
        \end{equation*}
        we have $(x',y')\not \in S_{\mathcal{P}}$.
    \end{itemize}

    For (\RMii), we take $(x, y)\not\in S_{\mathcal{P}}$.
    \begin{itemize}
        \item If $y< 2\left\lfloor{\frac{x}{4}}\right\rfloor$, let $p$ be the quotient and $q$ be the remainder when $y$ is divided by $2$. We show that we can make a move $(x,y)\to (4p, y+1)$, and $(4p,y+1)\in S_{\mathcal{P}}$. First, $2\left\lfloor{\frac{4p}{4}}\right\rfloor = 2p\le y$ 
        holds and from the assumption, $2\left\lfloor{\frac{4p}{4}}\right\rfloor\le y<2\left\lfloor{\frac{x}{4}}\right\rfloor$ holds. It implies $p+1\le \left\lfloor{\frac{x}{4}}\right\rfloor$. Therefore, $4\le x - 4p$ holds and demonstrate that this move is possible. Also, since $y=2p+q$,
        \begin{equation*}
            y+1 = 2p+q+1\le 2p+2 = 2\left\lfloor{\frac{4p}{4}}\right\rfloor +2
        \end{equation*}
        holds, which shows that $2\left\lfloor{\frac{4p}{4}}\right\rfloor\le y+1 \le 2\left\lfloor{\frac{4p}{4}}\right\rfloor +2$.
        \item If $2\left\lfloor{\frac{x}{4}}\right\rfloor + 2 < y$, we show that we can make a move $(x,y)\to (x, 2\left\lfloor{\frac{x}{4}}\right\rfloor )$, and $(x, 2\left\lfloor{\frac{x}{4}}\right\rfloor )\in S_{\mathcal{P}}$.
       From the assumption, $2 < y- 2\left\lfloor{\frac{x}{4}}\right\rfloor$, which shows that this move is possible. Also, it is clear that $(x, 2\left\lfloor{\frac{x}{4}}\right\rfloor )\in S_{\mathcal{P}}$.
    \end{itemize}
Therefore, the set of \Pps~of this combinatorial game is shown by $S_{\mathcal{P}}$ and by using Theorem \ref{Thm:DYNeven}, the set of \Pps~of \dyn~on $G_5$ is $S=S_1\cup S_2$.
\end{proof}


\begin{corollary}
Let $G_6=(V,E)$ be a digraph with $V=\{v,w_1,w_2, \cdots, w_n\}$ and $E=\{(v,w_k) \mid k=1,2,\cdots,n\}$.
We consider \dyn~on $G_6$, in other words, we consider a ruleset $\Gamma=(M,f)$ such that
  \begin{eqnarray*}
  M &=& (\Zz)^{n+1}~(n\in\N),\\
  f(x,y_1,y_2,\cdots,y_{n-1},y_n)&=& \{(x-i,y_1+1,y_2+1\cdots,y_n+1) \mid n+1 \leq i \leq x \} \\
           &\cup& \{(x,y_1-i,y_2,\cdots,y_{n-1},y_n) \mid 1 \leq i \leq y_1 \} \\
           &\cup& \{(x,y_1,y_2-i,\cdots,y_{n-1},y_n) \mid 1 \leq i \leq y_2 \} \\
           &\cup& \cdots\\
           &\cup& \{(x,y_1,y_2,\cdots,y_{n-1}-i,y_n) \mid 1 \leq i \leq y_{n-1} \} \\
           &\cup& \{(x,y_1,y_2,\cdots,y_{n-1},y_n-i) \mid 1 \leq i \leq y_n \},
  \end{eqnarray*}
  where $x$ is the number of tokens on vertex $v$ and $y_j$ is that on vertex $w_j$.
  
  If $n$ is odd, then the set of \Pps~of this ruleset is
$$S = \left\{(x,y_1,\cdots,y_n)\middle| \bigoplus_{j=1}^{n} y_j =0 \right\},$$
and if $n$ is even, then the set of \Pps~of this ruleset is $S = S_1 \cup S_2$, where
\begin{eqnarray*}
\label{Eqn:f_functionpart1Pps}
S_1&=&
\left\{ 
(x,y_1,\cdots,y_n)
\middle|
\begin{gathered} 
x\leq n \\ 
\bigoplus_{j=1}^{n} y_j =0\\
\end{gathered}
\right\},\\
S_2&=&
\left\{ 
(x,y_1,\cdots,y_n)
\middle|
\begin{gathered} 
x\geq n+1 \\   
\bigoplus_{j=1}^{n} y_j =1\\
\end{gathered}
\right\}.\\
\end{eqnarray*}
\end{corollary}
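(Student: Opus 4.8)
The plan is to recognize $G_6$ as the special case of the digraph $G_4$ in which $V_1$ is a single vertex, and then to quote Theorems \ref{Thm:DYNodd} and \ref{Thm:DYNeven} directly. Concretely, set $V_1=\{v\}$ and $W_1=\{w_1,\dots,w_n\}$. The three defining conditions of $G_4$ are immediate: $V=V_1\cup W_1$ is a disjoint union; from the unique vertex $v\in V_1$ there is exactly one edge $(v,w_k)\in E$ to each $w_k\in W_1$; and $d_\mathrm{out}(w_k)=0$ for every $k$. Hence $\lvert W_1\rvert=n$, so the case split of the statement according to the parity of $n$ is precisely the case split of Theorems \ref{Thm:DYNodd} and \ref{Thm:DYNeven} (with the role of the symbol $m$ there played by the present $n$, and the role of the symbol $n$ there played by $1$).

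If $n$ is odd, Theorem \ref{Thm:DYNodd} applies verbatim: $g(x,y_1,\dots,y_n)=\bigoplus_{j=1}^{n}y_j$, so the position is a \Pp~if and only if $\bigoplus_{j=1}^{n}y_j=0$, which is the asserted set $S$.

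If $n$ is even, I would invoke Theorem \ref{Thm:DYNeven}; the only thing left to do is to determine the sets $S_{\PP}$ and $S_{\NN}$ appearing there, namely the $\PP$- and $\NN$-positions of the auxiliary ruleset on $G_6$. Because $V_1$ consists of the single vertex $v$ and there are no edges inside $V_1$, that auxiliary ruleset is simply the one-pile game in which a move deletes at least $d_\mathrm{out}(v)+1=n+1$ tokens from $v$ and changes nothing else. In it, a position $x$ with $x\le n$ is terminal, hence a \Pp, whereas a position $x$ with $x\ge n+1$ admits the move $x\to 0$ into the terminal (thus \Pp) position $0$, hence is an \Np. Therefore $S_{\PP}=\{x\mid x\le n\}$ and $S_{\NN}=\{x\mid x\ge n+1\}$. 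Plugging these into the definitions of $S_1$ and $S_2$ in Theorem \ref{Thm:DYNeven} reproduces exactly the two sets $S_1$ and $S_2$ in the statement, which finishes the argument.

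There is essentially no obstacle here: the proof is a bookkeeping exercise in matching $G_6$ to the hypotheses of $G_4$ and in observing that, when $\lvert V_1\rvert=1$, the auxiliary game collapses to the trivial subtraction game described above. Once that is checked, Theorems \ref{Thm:DYNodd} and \ref{Thm:DYNeven} supply the result with no further computation.
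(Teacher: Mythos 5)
Your proposal is correct and follows essentially the same route as the paper: apply Theorem \ref{Thm:DYNodd} directly when $n$ is odd, and for even $n$ invoke Theorem \ref{Thm:DYNeven} after observing that the auxiliary game collapses to the one-pile subtraction game with $S_{\PP}=\{x\le n\}$ and $S_{\NN}=\{x\ge n+1\}$. The only difference is that you justify the "obvious" determination of $S_{\PP}$ and $S_{\NN}$ explicitly (terminal positions versus the move to $0$), which the paper leaves unstated.
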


Figure \ref{fig:star} shows the digraph $G_6$.

\begin{figure}[tb]
    \centering
    \includegraphics[width=1\linewidth]{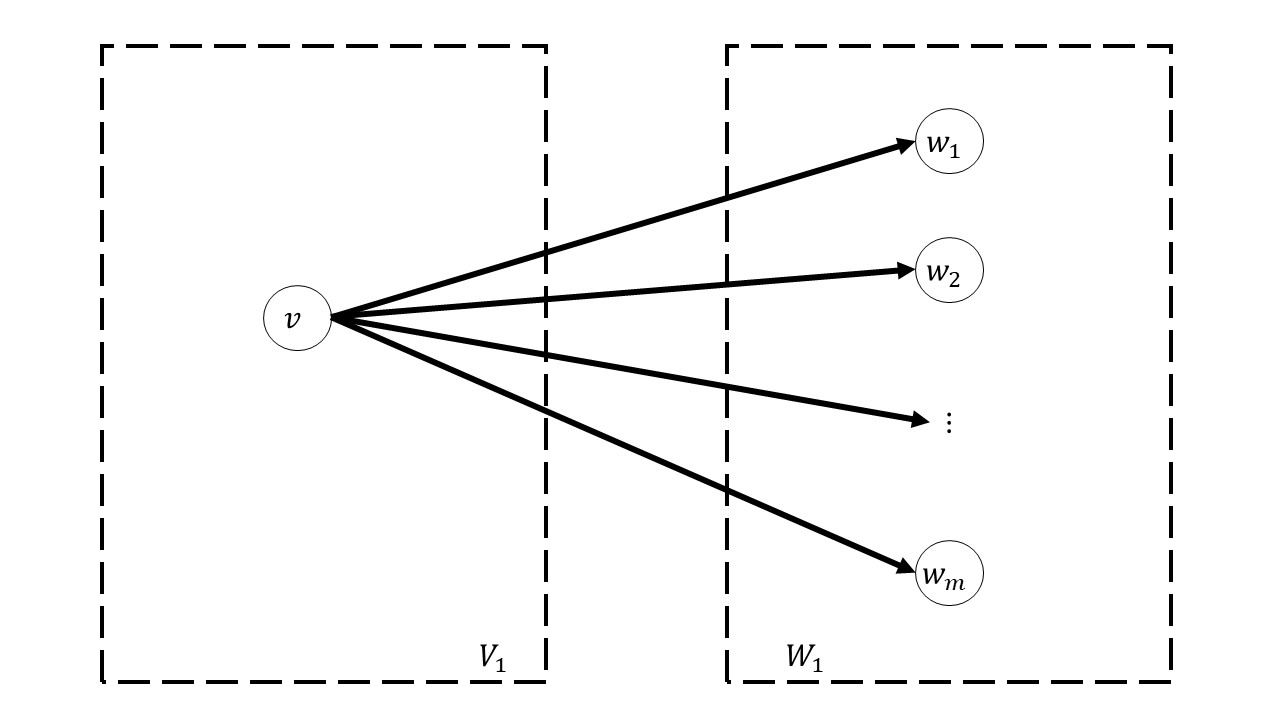}
    \caption{Digraph $G_6$}
    \label{fig:star}
\end{figure}

\begin{proof}
   When $n$ is odd, from Theorem \ref{Thm:DYNodd}, the set of \Pps~is $$S = \left\{(x,y_1,\cdots,y_n)\middle| \bigoplus_{j=1}^{n} y_j =0 \right\}.$$

   When $n$ is even, in order to apply Theorem \ref{Thm:DYNeven}, we consider a combinatorial game on $G_6$ allowing the following move.
   \begin{itemize}
       \item Remove at least $n+1$ tokens from $v$ and add $1$ token to each $w_j$.
   \end{itemize}
   Then, it is obvious that the sets $S_\mathcal{P}$ and $S_\mathcal{N}$ are 
   \begin{eqnarray*}
    S_\mathcal{P} &=& \{(x, y_1, \ldots, y_n)\mid x \le n\}, \\
    S_\mathcal{N} &=& \{(x, y_1, \ldots,y_n) \mid x \ge n+1\}.
   \end{eqnarray*}
   Therefore, the set of \Pps~of the original game is $S = S_1 \cup S_2$, where
\begin{eqnarray*}
\label{Eqn:f_functionpart1Pps}
S_1&=&
\left\{ 
(x,y_1,\cdots,y_n)
\middle|
\begin{gathered} 
x\leq n \\ 
\bigoplus_{j=1}^{n} y_j =0\\
\end{gathered}
\right\},\\
S_2&=&
\left\{ 
(x,y_1,\cdots,y_n)
\middle|
\begin{gathered} 
x\geq n+1 \\   
\bigoplus_{j=1}^{n} y_j =1\\
\end{gathered}
\right\}.\\
\end{eqnarray*}
\end{proof}

\vspace{3mm}

\bmhead{Acknowledgments}
 This work is supported by JSPS Kakenhi 23K03071. This work is also supported by JST, the establishment of university fellowships towards the creation of science technology innovation, Grant Number JPMJFS2129 and JST SPRING, Grant Number JPMJSP2132. This work is also supported by JSPS KAKENHI Grant Number JP24KJ1232.

\end{document}